\documentclass[11pt,a4paper]{article}
\usepackage[a4paper,margin=2.3cm]{geometry}
\usepackage{amsmath,amssymb,amsfonts,mathrsfs,amsthm}
\usepackage{array,booktabs,longtable,enumitem}
\usepackage{graphicx,xcolor,hyperref}
\usepackage[T1]{fontenc}
\allowdisplaybreaks
\hypersetup{colorlinks=true,linkcolor=blue,urlcolor=blue,citecolor=blue}
\setlist{nosep}
\newif\ifmarked
\ifdefined\MARKED\markedtrue\else\markedfalse\fi
\ifmarked

\newcommand{\deleted}[1]{\textcolor{red}{#1}}
\else

\newcommand{\deleted}[1]{}
\fi

\newtheorem{theorem}{Theorem}[section]

\newtheorem{lemma}{Lemma}[section]
\newtheorem{cor}[theorem]{Corollary}

\begin{document}
\title
{\LARGE \textbf{Almost complete graphs determined by Laplacian hook immanantal polynomials}}

\author{Shuaijun Li$^{a}$,  Guangfu Wang$^{b,c}$ \thanks{{Corresponding author.\newline
\emph{E-mail address}: gfwang@ytu.edu.cn
}},\\
{\small $^{a}$ School of Mathematics and Statistics, Qinghai Minzu University, }\\
{\small  Xining, Qinghai 810007, P.R.~China} \\
{\small $^{b}$ School of Mathematical and Information Sciences, Yantai University,}\\
{\small Yantai, Shandong 264000, P.R.~China}\\
{\small $^{c}$ ECOPRO, Institute for Basic Science,}\\
{\small 55 Expo-ro, Yuseong-gu, Daejeon, 34126, Korea}
}

\date{}

\maketitle

\noindent {\bf Abstract: }
Let \(\mathscr{G}_n\) be the family of simple graphs obtained from \(K_n\) by deleting at most five edges. For a fixed integer \(1\leq k\leq n\), let \(\Phi_k(L(G),x)\) denote the immanantal polynomial of the Laplacian matrix associated with the hook partition \((k,1^{n-k})\). We prove that, for \(n>7\) and \(n\neq 2k-1\), every graph in \(\mathscr{G}_n\) is determined by \(\Phi_k(L(G),x)\) among all simple graphs.  We prove that, for \(n>7\) and \(n\neq 2k-1\), every graph in \(\mathscr{G}_n\) is determined by \(\Phi_k(L(G),x)\) among all simple graphs. The proof recovers the order, size, and degree-square sum from the first coefficients, and then separates the remaining candidates by explicit third- and fourth-coefficient comparisons based on the finite classification of complements with at most five edges. The case \(n=2k-1\) is left open because the binomial differences used in these comparisons vanish.

\smallskip

\noindent {\bf Keywords:} Laplacian matrix; Hook immanant; Hook immanantal polynomial; The $k$-th hook immanant partner\\
\noindent\textbf{MSC}: 05C31; 05C30; 15A15

\section{Introduction}
Let \(G=(V(G),E(G))\) be a simple graph with vertex set \(V(G)=\{v_1,v_2,\dots,v_n\}\) and edge set \(E(G)\). Let \(d_{v_i}\) denote the degree of vertex \(v_i\), and let \(D(G)=\mathrm{diag}(d_{v_1},d_{v_2},\dots,d_{v_n})\) be the degree diagonal matrix of G. The adjacency matrix of G is an \(n\times n\) matrix denoted by \(A(G)=[a_{ij}]\), where \(a_{ij}=1\) if vertices \(v_i\) and \(v_j\) are adjacent, and \(a_{ij}=0\) otherwise. The Laplacian matrix of graph G is defined as
\(L(G)=D(G)-A(G).\)

Let $n$ be a positive integer, and let \(S_n\) denote the symmetric group of order $n$. For any partition \(\lambda = (\lambda_1,\lambda_2,\dots,\lambda_n)\), let \(\chi_\lambda\) stand for the irreducible character of \(S_n\). For any \(n\times n\) matrix M, the immanant function of M associated with \(\chi_\lambda\) is defined as
$$d_\lambda(M) = \sum_{\sigma\in S_n} \chi_\lambda(\sigma) \prod_{i=1}^n m_{i,\sigma(i)}.$$
When \(\lambda = (k,1^{n-k})\), \(d_{(k,1^{n-k})}(M)\) is called the hook immanant of M. In particular, the determinant \(\det M\) and permanent \(\operatorname{per} M\) correspond respectively to the alternating character \(\chi_{(1^n)}\) and the trivial character \(\chi_{(n)}\).  B\"{u}rgisser \cite{Bu1} proved that the computation of hook immanants is both \(\#\text{P}\)-complete and VNP-complete as the polynomial width grows. As a special graph invariant, hook immanants have profound intrinsic connections with the structural properties of graphs and have been widely studied in algebraic combinatorics~\cite{CD,CF}. Morris \cite{Merris1} uncovered the inherent relation between the hook immanant of the adjacency matrix and the number of Hamiltonian cycles in a graph. Chan and Lam \cite{Chan1} characterized the minimum bound of the immanants of Laplacian matrices of trees.The theories concerning matrix immanants and the immanantal properties of graph matrices have been systematically elaborated in numerous references; see~\cite{D,I}.

Let \(I_n\) denote the \(n\times n\) identity matrix. The hook immanantal polynomial \(\Phi_k(M,x)\) of a matrix M is defined as \(d_{(k,1^{n-k})}(xI_n - M)\), i.e.,
$$\Phi_k(M,x) = d_{(k,1^{n-k})}(xI_n - M).$$
In particular, suppose M is a graph matrix. When \(k=1\), \(\Phi_1(M,x)\) is called the characteristic polynomial of the graph matrix M; when \(k=2\), \(\Phi_2(M,x)\) is referred to as the second immanantal polynomial of M; when \(k=n\), \(\Phi_n(M,x)\) is termed the permanent polynomial of $M$. The polynomial \(\Phi_k(L(G),x) = d_{(k,1^{n-k})}(xI_n - L(G))\) is named the $k$-th hook immanantal polynomial of graph $G$. The cases \(k=1,2,n\) recover the characteristic, second immanantal, and permanent polynomials, respectively. The characteristic polynomials and permanent polynomials of graph matrices have been extensively investigated \cite{Liu1}. Morris~\cite{Merris3} pointed out that almost all trees possess a complete set of Laplacian immanantal polynomials. Cash \cite{Cash1} generalized the Sachs theorem for adjacency matrices to immanantal polynomials and studied their structural properties of chemical molecular graphs. Yu and Qu~\cite{Yu1}~derived an explicit expression for the immanantal polynomial of a graph matrix M via fundamental subgraphs. In algebraic combinatorics, immanants corresponding to several special partitions have been widely researched \cite{DM,Wu3,Wu4}. For further investigations on immanantal polynomials, readers may refer to~\cite{AE,Nagar1,Merris4}.

For a fixed \(k\), two n-vertex graphs \(G\) and \(H\) are called \(k\)-th hook-immanantally cospectral if \(\Phi_k(L(G),x)=\Phi_k(L(H),x)\). If they are non-isomorphic, they form a pair of \(k\)-th hook-immanantal partners. If equality holds for every \(1\leq k\leq n\), we say that \(G\) and \(H\) have the same complete hook-immanantal polynomial family. A graph $G$ is said to be determined by its $k$-th hook immanantal polynomial if $G$ has no $k$-th hook immanantal partner.

A classical open problem in spectral graph theory reads: which graphs are determined by their polynomials? This problem is notoriously difficult within the theory of graph polynomials and has attracted substantial attention from researchers in recent years. Van Dam and Haemers \cite{ER,van1} systematically investigated polynomials associated with various graph matrices and remarked that characterizing graphs which are determined by a given graph polynomial constitutes an intriguing research direction. Morris et al. \cite{Merris5} first initiated the study of the question: which graphs are determined by their Laplacian permanent polynomials? Liu \cite{Liu2} proved that both the complete graph \(K_n\) and the star graph \(S_n\) are uniquely determined by their Laplacian permanent polynomials. Liu et al. \cite{liux1} verified that a family of unicyclic graphs are uniquely determined by their Laplacian characteristic polynomials. For further relevant results, the reader is referred to [23,24].
\begin{figure}[h]
\centering
\includegraphics[scale=0.6]{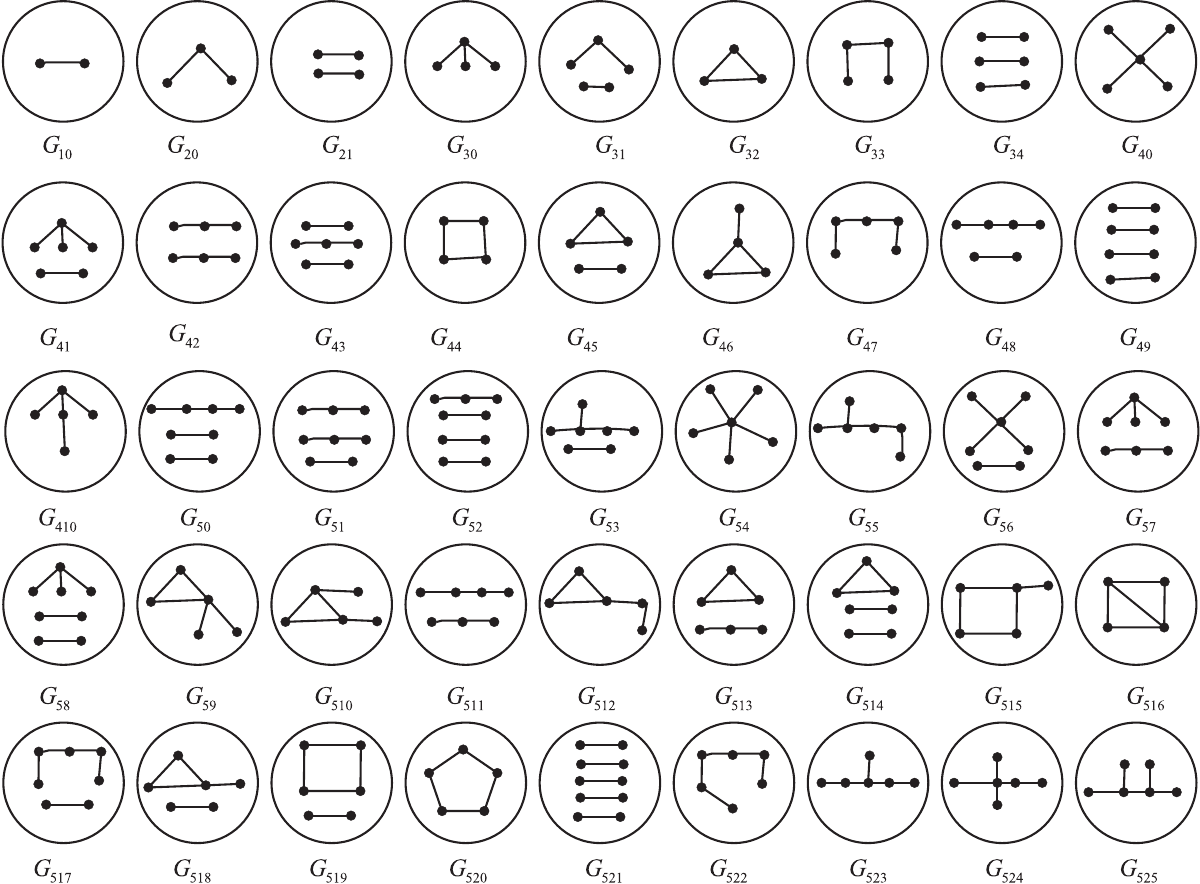}
\caption{$\mathscr{G}_{n}$:All subgraphs obtained by deleting 1 to 5 edges from the complete graph \(K_n\); the displayed labels refer to the deleted-edge complements.}
\end{figure}

Let \(\mathscr{G}_n\) be the family of simple graphs obtained from \(K_n\) by deleting at most five edges. Zhang and Wu characterized a graph family \(\mathscr{G}_n\) in ~\cite{H1}, which consists of all graphs obtained by deleting at most five edges from the complete graph \(K_n\) (see Figure 1). We write \(G_{ij}=K_n-E(F_{ij})\), where \(F_{ij}\) is the small deleted-edge graph represented by the label \(G_{ij}\) in Figure 1. C\'{a}mara and Haemers \cite{ca} proved that every graph in \(\mathscr{G}_n\) is uniquely determined by the characteristic polynomial of its adjacency matrix. Zhang and Wu \cite{H1} further showed that all graphs in \(\mathscr{G}_n\) are uniquely determined by their Laplacian permanent polynomials. Zeng and Wu verified that graphs in \(\mathscr{G}_n\) can be uniquely identified by their second Laplacian immanantal polynomials.Since the characteristic polynomial, permanent polynomial, and second immanantal polynomial are all special cases of hook immanantal polynomials, a natural question arises: are all graphs in \(\mathscr{G}_n\) uniquely determined by their $k$-th hook immanantal polynomials?

We discuss the above question and derive the following theorem:

\begin{theorem}\label{thm:main}
For any \(1\leq k\leq n\) and any $n$-vertex graph \(G\in\mathscr{G}_n\), if \(n>7\) and \(n\neq 2k-1\), then \(G\) is uniquely determined by its \(k\)-th hook immanantal polynomial.
\end{theorem}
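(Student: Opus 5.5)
We expand $\Phi_k(L(G),x)=\sum_{i\ge 0}(-1)^i c_i x^{n-i}$ and compute the first few coefficients as explicit functions of simple graph invariants. The character of the hook $(k,1^{n-k})$ on a permutation depends only on its cycle type. The coefficient $c_i$ is a sum, over $i$-subsets $S$ of vertices, of the hook immanant of the principal submatrix $L[S]$. Here the character on $S_n$ is evaluated at permutations fixing the complement of $S$. Only permutations of $S$ with small support contribute, namely the identity, transpositions, $3$-cycles, and for $i=4$ also $4$-cycles and double transpositions. Hence $c_0,\dots,c_4$ can be written in terms of $n$, the degrees, the number of edges $m$, $\sum d_v^2$, $\sum d_v^3$, the number of triangles, and counts of small subgraphs such as $P_3$, $2K_2$, $C_4$ and $P_4$-type terms. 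Each term carries a coefficient built from $\chi_{(k,1^{n-k})}(\mathrm{id})=\binom{n-1}{k-1}$, $\chi(\text{transposition})=\binom{n-2}{k-1}-\binom{n-2}{k-2}$, and so on. By the Murnaghan--Nakayama rule these are explicit binomial differences.

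First we recover $n$ from the degree. Then $c_1=\chi(\mathrm{id})\cdot 2m$ recovers $m$, since $\chi(\mathrm{id})\neq 0$. Next, $c_2$ equals $\chi(\mathrm{id})\sum_{u<v}d_ud_v$ plus a transposition term. The transposition term is $\chi(\tau)\,m$, coming from the off-diagonal products $(-a_{uv})^2$. Since $m$ is already known, $c_2$ recovers $\sum_v d_v^2$. Consequently any partner $H$ of $G\in\mathscr{G}_n$ has the same $n$, $m$ and $\sum d_v^2$.

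We then show that $H\in\mathscr{G}_n$. Passing to complements, $\overline H$ has $e\le 5$ edges. The degree-square sum satisfies $\sum \bar d_v^2=\sum (n-1-d_v)^2$, which is determined, so $\overline H$ has the same number of edges and the same degree-square sum as $\overline G=F_{ij}$. Thus $H=K_n-E(F)$ for some $F$ with the same number of edges, at most five, and these graphs are finitely classified in Figure 1. Within each class of graphs sharing $(e,\sum \bar d^2)$, we list the remaining candidate pairs $(F,F')$. For each pair we compute the difference of $c_3$, and of $c_4$ when needed.

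Expressed through the complement, $c_3(G)-c_3(H)$ reduces, after the common terms in $n,m,\sum d^2$ cancel, to a linear combination of $\Delta\sum\bar d_v^3$ and $\Delta t(F)$, the difference in triangle counts. The coefficients are polynomials in $n$ multiplied by characters such as $\chi(\text{3-cycle})$. By Murnaghan--Nakayama, $\chi_{(k,1^{n-k})}$ on a transposition equals $\binom{n-2}{k-1}-\binom{n-2}{k-2}$, which vanishes exactly when $n=2k-1$. This is where the hypothesis is used: for $n\ne 2k-1$ that factor is nonzero. The key step is to show that the resulting combination is nonzero for every candidate pair when $n>7$. We group the pairs by whether $\sum \bar d^3$ differs. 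When it does not, the pairs typically differ in triangles or in $P_3$ counts, and $c_3$ separates them. For the pairs that agree on all of these, such as $C_4\cup K_1$ versus $K_{1,3}+e$-type configurations with equal degree sequences, we move to $c_4$, which involves $C_4$ counts and $2K_2$ counts weighted by the double-transposition and $4$-cycle characters.

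I expect the main obstacle to be the $c_4$ comparisons. There we need the differences to be nonzero polynomials in $n$ with binomial-character coefficients, and we must rule out accidental zeros for integer $n>7$, $k\le n$. For this I would factor out $\binom{n-4}{k-2}$-type quantities, using Pascal's rule to rewrite every character value in terms of a common binomial. The remaining factor would then be shown to be a polynomial in $(n,k)$ that is either of constant sign, or vanishes only at $n=2k-1$ or for $n\le 7$. The bound $n>7$ should be exactly what excludes the small sporadic roots.
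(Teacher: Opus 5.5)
Your outline follows the paper's strategy: recover $n$, $m$ and $\sum d_i^2$ from $c_0,c_1,c_2$; reduce to $H=K_n-E(F)$ with $F$ in the same class; then separate within classes by $c_3$ and $c_4$. \textbf{The gap is that the proposal stops where the proof begins: no candidate pair is ever separated, and the mechanism you offer for separating them is misplaced.}

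\emph{The $c_3$ stage.} Inside a class, the transposition part of $c_3$ is $\chi(\tau)\mathcal{M}_3^1$. Since $\mathcal{M}_3^1=2m^2-\sum d_i^2$ is already fixed, this term cancels, and writing $\Delta$ for ``value at $G$ minus value at $H$'',
\[
c_3(G)-c_3(H)=\tbinom{n-1}{k-1}\,\Delta F_3-2\,\chi(\text{3-cycle})\,\Delta|\mathscr{C}_3|,\qquad \Delta F_3=\tfrac13\,\Delta\textstyle\sum_i d_i^3 .
\]
So the nonvanishing of $\chi(\tau)$ for $n\neq 2k-1$ plays no role here. Also, $\chi(\tau)=\binom{n-2}{k-2}-\binom{n-2}{k-1}$, the negative of what you wrote; it equals $1$ at $k=n$. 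Several other steps in your sketch fail:
\begin{itemize}
\item $\#P_3=\sum_v\binom{d_v}{2}$ is constant on a class, so pairs cannot ``differ in $P_3$ counts.''
\item A difference in $\sum\bar d^3$ does not suffice. For $G_{518}=K_n-(\text{paw}\cup K_2)$ and $G_{520}=K_n-C_5$ one has $\Delta F_3=-2$ and $\Delta|\mathscr{C}_3|=-1$, and the two terms cancel at $k=n$, so $c_4$ is required there.
\item Murnaghan--Nakayama does not give a difference for the $3$-cycle. A vertical $3$-strip has even leg length, so $\chi(\text{3-cycle})=\binom{n-4}{k-4}+\binom{n-4}{k-1}$. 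At $k=1$ this is $\operatorname{sgn}$ of a $3$-cycle, namely $+1$. This differs from the bracket printed in Lemma 2.1 and does not vanish at $n=2k-1$.
\end{itemize}
With the correct value, Vandermonde gives $\binom{n-1}{k-1}-\chi(\text{3-cycle})=3\binom{n-4}{k-2}+3\binom{n-4}{k-3}$. This is the comparison you need in order to sign-check $(\Delta F_3,\Delta|\mathscr{C}_3|)$ for every pair in Lemmas 3.2--3.4.

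\emph{The $c_4$ stage.} This is the real crux, and your description of it is also off.
\begin{itemize}
\item Your example $C_4\cup K_1$ versus $K_{1,3}+e$ is not a candidate pair: the two have different degree sequences and different $\sum\bar d^2$.
\item The genuine pairs are $G_{42}/G_{48}$, $G_{511}/G_{517}$, $G_{53}/G_{57}$, $G_{519}/G_{522}$, $G_{55}/G_{523}$ and $G_{50}/G_{51}$ (for example, complements $2P_3$ and $P_4\cup K_2$). Their complements share both degree sequence and triangle count.
\item For these pairs $F_4$ and the $2K_2$ term $\binom m2-\sum\binom{d_i}{2}$ both cancel, leaving
\[
c_4(G)-c_4(H)=\chi(\tau)\,\Delta\!\sum_{uv\in E}d_ud_v+2\chi(\text{3-cycle})\,\Delta\sum_j\mathscr{T}_j+2\chi(\text{4-cycle})\,\Delta|\mathscr{C}_4| .
\]
\end{itemize}
You therefore need these three invariants for every pair (the paper's Tables 3 and 5), and a proof that the resulting character combination is nonzero. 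The paper does this with its common-sign lemma for the binomial brackets, Corollary 2.1, and separate computations for $k=3,4$.

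Your ``common binomial'' idea can be made precise as follows. $\chi_{(k,1^{n-k})}(\sigma)$ is the coefficient of $t^{n-k}$ in $(1+t)^{-1}\prod_{c}\bigl(1-(-t)^{|c|}\bigr)$, the product running over the cycles of $\sigma$. Every combination you need then becomes the coefficient of $t^{n-k}$ in $(1+t)^{n-5}P(t)$, where $P$ is an explicit polynomial of degree at most $4$, and this coefficient cannot vanish when the coefficients of $P$ share a sign. Without such an argument carried out pair by pair, including the $k=n$ case for $G_{518}/G_{520}$, the proposal is a plan rather than a proof.
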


The remainder of this paper is organized as follows. In Section 2, we derive several special properties of the Laplacian hook immanantal polynomials and present some inequalities for combinatorial sequences. In Section 3, we provide the proof of Theorem 1.1. In Section 4, we summarize the main conclusions of this paper and propose several open problems.
\section{Preliminaries}

Let $G$ be a simple graph with vertex set \(V(G)=\{v_1,v_2,\dots,v_n\}\). Denote by \(G-e\) the graph obtained by deleting an edge $e$ from $G$.
Let~$\mathfrak{D}(G)$~stand for the degree sequence of G, that is, ~$\mathfrak{D}(G) = (d(v_1), d(v_2), \dots, d(v_n))$. Let \(\mathscr{C}(G)\) and \(\mathscr{M}(G)\) denotedenote the sets consisting of all cycles of length $l$ and all $l$-matchings in $G$, respectively.
Let~$\mathscr{T}_j(G)$~stand for the sum of the degrees of the three vertices of the $j$-th triangle in~$\mathscr{C}_3(G).$~
If~$C = v_{i_1}v_{i_2}\cdots v_{i_l}v_{i_1} \in \mathcal{C}_l(G)$, then let~$\mathfrak{D}_C(G)$~denote the degree sequence obtained by deleting ~$d(v_{i_1}), d(v_{i_2}), \dots, d(v_{i_l})$~from ~$\mathfrak{D}(G),$~
If ~$\mathfrak{M} = \{v_{i_1}v_{i'_1}, v_{i_2}v_{i'_2}, \dots, v_{i_l}v_{i'_l}\} \in \mathscr{M}_l(G)$, then let ~$\mathfrak{D}_{\mathfrak{M}}(G)$~denote the degree sequence obtained by deleting ~$d(v_{i_1}), d(v_{i'_1}), d(v_{i_2}), d(v_{i'_2}), \dots, d(v_{i_l}), d(v_{i'_l})$~from~$\mathfrak{D}(G)$,~
Let \(F_r\) denote the $r$-th elementary symmetric function, and define$F_r(G) = F_r(\mathfrak{D}(G))$. For any integer \(r \ge l\), we define
$$\mathcal{C}_r^l(G) = \sum_{C \in \mathscr{C}_l(G)} F_{r-l}(\mathfrak{D}_C(G)),$$
and
$$\mathcal{M}_r^l(G) = \sum_{\mathfrak{M} \in \mathscr{M}_l(G)} F_{r-2l}(\mathfrak{D}_{\mathfrak{M}}(G)).$$

\begin{lemma}
(\cite{Dong})Let $G$ be a graph with $n$ vertices and $m$ edges, and $(d(v_{1}),d(v_{2}),\ldots,d(v_{n}))$ denote the degree sequence of $G$. Denote
\begin{eqnarray*}
\Phi_{(k,1^{n-k})}(xI -L(G)) = \sum_{r=0}^{n} (-1)^r c_{(k,1^{n-k}),r}(L(G))x^{n-r}.
\end{eqnarray*}
Then
\begin{eqnarray*}
c_{(k,1^{n-k}),0}(L(G)) &=& \binom{n-1}{k-1}, \\
c_{(k,1^{n-k}),1}(L(G)) &=& F_1(G)\binom{n-1}{k-1} = 2m\binom{n-1}{k-1}, \\
c_{(k,1^{n-k}),2}(L(G)) &=& F_2(G)\binom{n-1}{k-1} +m\bigg[\binom{n-2}{k-2}-\binom{n-2}{k-1}\bigg]\quad (n \geq 3), \\
c_{(k,1^{n-k}),3}(L(G)) &=& F_3(G)\binom{n-1}{k-1} +\mathcal{M}_3^{1}(G)\bigg[\binom{n-2}{k-2}-\binom{n-2}{k-1}\bigg]\\
&&-2|\mathscr{C}_{3}(G)|\bigg[\binom{n-4}{k-4}-\binom{n-4}{k-1}\bigg]  \quad (n \geq 4),\\
c_{(k,1^{n-k}),4}(L(G)) &=& F_4(G)\binom{n-1}{k-1} +\mathcal{M}_4^{1}(G)\bigg[\binom{n-2}{k-2}-\binom{n-2}{k-1}\bigg]\\
&&-2\mathcal{C}_{4}^{3}(G)\bigg[\binom{n-4}{k-4}-\binom{n-4}{k-1}\bigg]+2|\mathscr{C}_{4}(G)|\bigg[\binom{n-5}{k-5}-\binom{n-5}{k-1}\bigg] \\
&&+\bigg[\binom{m}{2}-\sum\limits_{i=1}^{n} \binom{d(v_{i})}{2}\bigg]\bigg[\binom{n-5}{k-5}-2\binom{n-5}{k-3}+\binom{n-5}{k-1}\bigg] \quad (n \geq 5),
\end{eqnarray*}
where \(|\mathcal{C}_l(G)|\) denotes the number of cycles of length l in G. Meanwhile, if \(b<0\) or \(b>a\), we stipulate that
\(\binom{a}{b}=0.\)

\end{lemma}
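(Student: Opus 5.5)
The plan is to expand the immanant of $xI_n-L(G)$ directly, grouping the permutations of $S_n$ by their non-fixed part, and then to evaluate the hook character on each small cycle type using the Murnaghan--Nakayama rule. Write $M=L(G)=(m_{ij})$, so that $m_{ii}=d(v_i)$, $m_{ij}=-1$ when $v_iv_j\in E(G)$, and $m_{ij}=0$ otherwise. For each $\sigma\in S_n$, expand $\prod_i (x\delta_{i\sigma(i)}-m_{i\sigma(i)})$. A factor $x$ can only be taken at a fixed point of $\sigma$. Hence the coefficient of $x^{n-r}$ collects, over all $r$-subsets $S\subseteq V(G)$ and all $\sigma$ that fix $V\setminus S$ pointwise, the terms
$$(-1)^r\chi_{(k,1^{n-k})}(\sigma)\prod_{i\in S}m_{i\sigma(i)}.$$
Therefore
$$c_{(k,1^{n-k}),r}(L(G))=\sum_{|S|=r}\ \sum_{\sigma\in \mathrm{Sym}(S)}\chi_{(k,1^{n-k})}(\sigma)\prod_{i\in S}m_{i\sigma(i)}.$$
Inside $S$, $\sigma$ decomposes into fixed points, which contribute degrees, and nontrivial cycles. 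A nontrivial cycle $(i_1\cdots i_l)$ contributes a nonzero product only when $v_{i_1}\cdots v_{i_l}$ is an $l$-cycle of $G$ (for $l\ge3$) or an edge (for $l=2$). Its value is then $(-1)^l$, and each cycle with $l\ge3$ has two orientations.

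Next I would list, for $r\le4$, the admissible cycle types on $S$. For $r=0,1$ only the identity occurs. For $r=2$ we get the identity or one transposition. For $r=3$ we add a transposition with one fixed point, or a $3$-cycle. For $r=4$ we get the identity; a transposition with two fixed points; a $3$-cycle with one fixed point; a $4$-cycle; and two disjoint transpositions. Summing the products over subsets gives the graph quantities in the statement:
\begin{itemize}
\item identity terms give $F_r(G)$;
\item a transposition together with fixed points gives $\mathcal{M}_r^1(G)$, and for $r=2$ this is simply $m$;
\item $3$-cycles give $-2|\mathscr{C}_3(G)|$, and with one extra fixed point give $-2\mathcal{C}_4^3(G)$;
\item $4$-cycles give $2|\mathscr{C}_4(G)|$;
\item two disjoint transpositions count the $2$-matchings, i.e.\ pairs of edges that share no vertex, which number $\binom{m}{2}-\sum_i\binom{d(v_i)}{2}$.
\end{itemize}
The last count holds because $\sum_i\binom{d(v_i)}{2}$ counts exactly the pairs of edges sharing a vertex. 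The hypotheses $n\ge3,4,5$ only ensure that these cycle types fit inside $S_n$.

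It remains to evaluate $\chi_{(k,1^{n-k})}$ on the cycle types $1^n$, $(2,1^{n-2})$, $(3,1^{n-3})$, $(4,1^{n-4})$ and $(2,2,1^{n-4})$. I would use the Murnaghan--Nakayama rule, removing the nontrivial cycles first. A border strip of length $l$ can be removed from a hook in only two ways. One way takes $l$ cells from the arm, leaving a hook with the same leg and sign $+1$. The other takes $l$ cells from the leg, leaving a hook with $k$ unchanged and sign $(-1)^{l-1}$. There are also degenerate cases in which the strip passes through the corner; these should be handled by the convention $\binom{a}{b}=0$. After all nontrivial cycles are removed, the character of a hook on the identity of $S_{n'}$ equals its dimension $\binom{n'-1}{k'-1}$. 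For example, this yields $\chi(1^n)=\binom{n-1}{k-1}$ and $\chi(2,1^{n-2})=\binom{n-2}{k-2}-\binom{n-2}{k-1}$. Two successive removals for $(2,2)$ produce the combination $\binom{n-5}{k-5}-2\binom{n-5}{k-3}+\binom{n-5}{k-1}$ appearing in the statement, after Pascal-type identities. Finally, I would multiply each character value by the corresponding graph quantity and the sign $(-1)^l$ per cycle, and collect the terms.

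I expect the main obstacle to be this character bookkeeping. The delicate points are the border-strip cases at the corner of the hook and the normalization of the binomial indices, which must match the stated forms such as $\binom{n-4}{k-4}-\binom{n-4}{k-1}$ for $3$-cycles. I would first verify each character formula against small cases, for instance $k=1$ (sign character) and $k=n$ (trivial character), before assembling the coefficients.
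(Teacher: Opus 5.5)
\textbf{Gap: the statement's triangle coefficient is wrong, so your plan to match it fails.}
The paper gives no proof of this lemma; it only cites it. Your route is the natural one: expand over permutations, read off $F_r$, $\mathcal{M}_r^1$, triangles, $4$-cycles and $2$-matchings, then evaluate hook characters by border-strip removal. All the graph-side bookkeeping you describe is correct.

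The step that fails is your plan to make the $3$-cycle term ``match the stated form'' $\binom{n-4}{k-4}-\binom{n-4}{k-1}$. It cannot match. Your own rule gives sign $(-1)^{3-1}=+1$ for a $3$-strip removed from the leg, so for $n\ge 4$
\[
\chi_{(k,1^{n-k})}\bigl((3,1^{n-3})\bigr)=\binom{n-4}{k-4}+\binom{n-4}{k-1}.
\]
This is also the coefficient of $t^{\,n-k}$ in $(1+t^3)(1+t)^{n-4}$. The correct triangle contributions are therefore:
\begin{itemize}
\item $-2|\mathscr{C}_3(G)|\bigl[\binom{n-4}{k-4}+\binom{n-4}{k-1}\bigr]$ in $c_3$;
\item $-2\mathcal{C}_4^3(G)\bigl[\binom{n-4}{k-4}+\binom{n-4}{k-1}\bigr]$ in $c_4$.
\end{itemize}
Hence the displayed $c_3$ and $c_4$ are false whenever $k\le n-3$ and $G$ contains a triangle.

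The $k=1$ check you propose already exposes this. A $3$-cycle is even, so $\chi_{(1^n)}$ equals $+1$ on it, whereas $\binom{n-4}{-3}-\binom{n-4}{0}=-1$. Two concrete counterexamples with $k=1$:
\begin{itemize}
\item For $G=K_4$, $\det(xI-L)=x(x-4)^3$, so $c_3=64$. The displayed formula gives $F_3-\mathcal{M}_3^1+2|\mathscr{C}_3|=108-36+8=80$.
\item For $G=K_3\cup K_2$, $\det(xI-L)=x^2(x-3)^2(x-2)$, so $c_4=0$. The displayed formula gives $28-27+4+3=8$.
\end{itemize}
No argument can prove the statement as printed. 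Your method proves a corrected version: the identity, transposition, $4$-cycle and $2$-matching coefficients agree with the statement, and only the sign inside the two triangle brackets changes.

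\textbf{What to do.} State and prove the corrected formula and flag the discrepancy, rather than tuning the bookkeeping to reproduce the printed one. This matters for the rest of the paper. Several $c_{k,3}$ comparisons in Section~3 invoke $n\neq 2k-1$ precisely to guarantee $\binom{n-4}{k-4}-\binom{n-4}{k-1}\neq 0$. The correct coefficient $\binom{n-4}{k-4}+\binom{n-4}{k-1}$ is strictly positive for all $1\le k\le n$ once $n\ge 6$.

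\textbf{A smaller correction.} The hypotheses $n\ge 4$ and $n\ge 5$ are not about the cycle types fitting into $S_n$; a $3$-cycle already lives in $S_3$. They exclude the degenerate case where a border strip is the whole remaining hook. The convention $\binom{a}{b}=0$ does not absorb that case: for $n=3$ the formula returns $0$ on a $3$-cycle, while the true value is $(-1)^{3-k}$. You should say the hypotheses rule this case out, not that the convention handles it.
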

\begin{lemma}
(\cite{Liu2})
Let $G$ be a simple connected graph with $n$ vertices and $m$ edges, and let \((d_1,d_2,\dots,d_n)\) denote its degree sequence, \(S_r(G)=\sum_{i=1}^{n}d_i^r\). Then: 
\begin{align*}
F_2(G) &= -\frac{1}{2}\sum_{i=1}^{n} d_i^2 + 2m^2,\\
F_3(G) &= \frac{1}{3}\sum_{i=1}^{n} d_i^3 - m\sum_{i=1}^{n} d_i^2 + \frac{4}{3}m^3,\\
F_4(G) &= -\frac{1}{4}\sum_{i=1}^{n} d_i^4 + \frac{2}{3}m\sum_{i=1}^{n} d_i^3 - m^2\sum_{i=1}^{n} d_i^2 + \frac{1}{8}\Bigl(\sum_{i=1}^{n} d_i^2\Bigr)^2 + \frac{2}{3}m^4,\\
\mathcal{M}_3^{1}(G) &= 2m^2 - \sum_{i=1}^{n} d_i^2,\\
\mathcal{M}_4^{1}(G) &= \sum_{i=1}^{n} d_i^3 - \frac{5}{2}m\sum_{i=1}^{n} d_i^2 + \sum_{v_i v_j \in E(G)} d_i d_j + 2m^3,\\
 \mathcal{C}_{4}^{3}(G)&= 2m|\mathscr{C}_3(G)|-\sum_{C\in\mathscr{C}_3(G)}\sum_{v\in V(C)}d(v)\\
&=2m|\mathscr{C}_3(G)|-\sum_j\mathscr{T}_j(G).
\end{align*}
\end{lemma}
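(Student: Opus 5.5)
The plan is to treat every quantity in the lemma as a symmetric function of the degree sequence and express it through the power sums \(S_r(G)=\sum_{i=1}^n d_i^r\). Two facts will be used repeatedly. First, \(S_1(G)=\sum_i d_i=2m\) by the handshake lemma. Second, \(\sum_{v_iv_j\in E(G)}(d_i+d_j)=S_2(G)\), because each vertex \(v_i\) occurs in exactly \(d_i\) edges, so it contributes \(d_i\cdot d_i\). In the same way \(\sum_{v_iv_j\in E(G)}(d_i^2+d_j^2)=S_3(G)\). Connectedness will not actually be needed.

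For \(F_2,F_3,F_4\) I would apply Newton's identities, which express the elementary symmetric functions \(e_r\) through the power sums \(p_r\):
\[
2e_2=p_1^2-p_2,\qquad 6e_3=p_1^3-3p_1p_2+2p_3,\qquad 24e_4=p_1^4-6p_1^2p_2+3p_2^2+8p_1p_3-6p_4 .
\]
Substituting \(p_1=2m\) and \(p_r=S_r(G)\) then gives the three stated formulas directly. For example, the last identity becomes
\[
24F_4=16m^4-24m^2S_2+3S_2^2+16mS_3-6S_4 .
\]

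For the matching and cycle terms I would remove the deleted degrees by splitting the full degree multiset.

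\(\mathcal M_3^1\): for an edge \(uv\), the sum of the remaining degrees is \(F_1(\mathfrak D_{\{uv\}})=2m-d_u-d_v\). Summing over the \(m\) edges gives \(2m^2-S_2\).

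\(\mathcal C_4^3\): for a triangle \(C\), the same reasoning gives \(F_1(\mathfrak D_C)=2m-\mathscr T_j(G)\), where \(\mathscr T_j(G)\) is the degree sum of that triangle. Summing over \(\mathscr C_3(G)\) yields both displayed forms of \(\mathcal C_4^3(G)\).

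\(\mathcal M_4^1\): this is the one step needing care. For an edge \(uv\), write \(R=\mathfrak D_{\{uv\}}\). Splitting \(F_2\) of the full sequence according to how many of \(d_u,d_v\) are used gives
\[
F_2(G)=F_2(R)+(d_u+d_v)F_1(R)+d_ud_v,\qquad F_1(R)=2m-d_u-d_v .
\]
Hence
\[
F_2(R)=F_2(G)-2m(d_u+d_v)+(d_u+d_v)^2-d_ud_v .
\]
Summing over all edges and using \((d_u+d_v)^2=d_u^2+d_v^2+2d_ud_v\) gives
\[
mF_2(G)-2mS_2+S_3+\sum_{v_iv_j\in E(G)}d_id_j .
\]
Inserting \(F_2(G)=2m^2-\tfrac12 S_2\) then produces \(2m^3-\tfrac52 mS_2+S_3+\sum_{v_iv_j\in E}d_id_j\), which is the stated formula.

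The main obstacle is only bookkeeping: getting the sign and coefficient of the cross term \(d_ud_v\) right in the \(\mathcal M_4^1\) computation, and checking the Newton coefficients for \(F_4\). Everything else is a direct substitution.
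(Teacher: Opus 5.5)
Your proposal is correct. Newton's identities with $p_1=2m$ give the three stated formulas for $F_2,F_3,F_4$; your expression for $24F_4$ divides out to exactly the displayed one. In the $\mathcal{M}_4^1$ computation the cross term also works out as you say, since $\sum_{uv\in E(G)}\bigl[(d_u+d_v)^2-d_ud_v\bigr]=S_3(G)+\sum_{uv\in E(G)}d_ud_v$, after which $mF_2(G)-2mS_2(G)=2m^3-\tfrac52 mS_2(G)$.

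The paper gives no argument for this lemma. It simply imports the identities from \cite{Liu2}, so there is no competing route to compare with. Your derivation is a self-contained substitute for that citation, and it has one concrete advantage. It shows the identities are purely algebraic consequences of the handshake lemma and the edge-sum identities $\sum_{uv\in E(G)}(d_u^r+d_v^r)=S_{r+1}(G)$, so the connectedness hypothesis in the statement is superfluous, as you observe. This is worth recording. The paper later uses the $F_2$ identity for an arbitrary graph with no connectivity assumed: in Lemma 2.4, the lemma recovering $n$, $m$ and $\sum_i d_i^2$ from the coefficients.
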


\begin{lemma}\label{lem:global-reduction}
	Let \(G\in\mathscr{G}_n\), and let \(H\) be any simple graph. For a fixed
	\(1\leq k\leq n\), if
	\(\Phi_k(L(G),x)=\Phi_k(L(H),x)\), then \(H\in\mathscr{G}_n\).
\end{lemma}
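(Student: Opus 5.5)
The plan is to read off the order and the size of \(H\) from the first two coefficients of \(\Phi_k(L(H),x)\) in Lemma 2.1. After that, membership in \(\mathscr{G}_n\) is a pure edge count. No structural information beyond \(m\) is needed, and no restriction on \(n\) or \(k\) beyond \(1\le k\le n\) enters.

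\emph{Step 1 (order).} For any graph on \(N\) vertices, \(\Phi_k(L,x)=d_{(k,1^{N-k})}(xI_N-L)\) is a polynomial of degree at most \(N\). Only the identity permutation contributes to \(x^N\), and it does so with coefficient \(\chi_{(k,1^{N-k})}(\mathrm{id})\). This is the first formula of Lemma 2.1, namely \(c_{(k,1^{N-k}),0}=\binom{N-1}{k-1}\), which is positive for \(1\le k\le N\). Hence \(\Phi_k(L(H),x)\) has exact degree \(|V(H)|\). Equality with \(\Phi_k(L(G),x)\), which has exact degree \(n\), forces \(|V(H)|=n\). In the paper's setting, where \(H\) is already an \(n\)-vertex graph, this step is just a consistency check.

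\emph{Step 2 (size).} Both polynomials are now written in the form of Lemma 2.1 with the same \(n\) and \(k\), so their coefficients of \(x^{n-1}\) agree. By Lemma 2.1 this gives
\[
2m(G)\binom{n-1}{k-1}=2m(H)\binom{n-1}{k-1}.
\]
Since \(\binom{n-1}{k-1}\neq 0\), we get \(m(H)=m(G)\).

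\emph{Step 3 (conclusion).} Because \(G\in\mathscr{G}_n\), we have \(m(G)\ge\binom{n}{2}-5\). The graph \(H\) is simple on \(n\) vertices, so it is a spanning subgraph of \(K_n\). The complement \(\overline{H}=K_n-E(H)\) then has \(\binom n2-m(H)\le 5\) edges, so \(H=K_n-E(\overline H)\) is obtained from \(K_n\) by deleting at most five edges, and \(H\in\mathscr{G}_n\).

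The only point needing care is Step 1. One must make sure the vertex count is genuinely recovered from the polynomial, using nonvanishing of the leading hook character value \(\binom{n-1}{k-1}\), and not silently assumed. Otherwise the argument is immediate. The real work of Theorem \ref{thm:main}, distinguishing graphs inside \(\mathscr{G}_n\), is deferred to the comparisons of the third and fourth coefficients.
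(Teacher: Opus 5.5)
Your proof is correct and follows the paper's argument: the exact degree of the polynomial, guaranteed by \(c_{k,0}=\binom{n-1}{k-1}>0\), recovers \(n\); the coefficient of \(x^{n-1}\) gives \(|E(H)|=|E(G)|=\binom{n}{2}-t\) with \(0\le t\le 5\); and any simple \(n\)-vertex graph with that many edges is \(K_n\) with \(t\) edges deleted. Your justification that only the identity permutation contributes to \(x^{n}\), with coefficient \(\chi_{(k,1^{n-k})}(\mathrm{id})\), is a detail the paper leaves implicit.
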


\begin{proof}
	The degree of the polynomial is the number of vertices, because
	\(c_{k,0}=\binom{n-1}{k-1}>0\). Thus \(H\) has \(n\) vertices. Equality of
	the leading coefficient and of the coefficient of \(x^{n-1}\), together
	with Lemma~2.1, gives
	\[
	|E(H)|=|E(G)|=\binom{n}{2}-t,\qquad 0\leq t\leq5.
	\]
	Every simple n-vertex graph with this many edges is obtained from \(K_n\)
	by deleting \(t\) edges, so \(H\in\mathscr{G}_n\).
\end{proof}

\begin{lemma}
For any graph $G$, its hook immanantal polynomial can determine the quantities: $n$, $m=|E(G)|$ and \(\displaystyle\sum_{i=1}^{n} d_i^2\). 
\end{lemma}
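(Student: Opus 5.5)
The plan is to read the three invariants off the first three coefficients in Lemma~2.1, one at a time. Throughout, $k$ is the fixed index of the hook partition $(k,1^{n-k})$, so it is known along with the polynomial.

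\textbf{Recovering $n$.} By Lemma~2.1 the coefficient of $x^{n}$ in $\Phi_k(L(G),x)$ is $c_{(k,1^{n-k}),0}(L(G))=\binom{n-1}{k-1}$. Since $1\le k\le n$, this is strictly positive. Hence $\Phi_k(L(G),x)$ has degree exactly $n$, and $n$ is recovered as the degree of the polynomial. This is the same observation used in the proof of Lemma~\ref{lem:global-reduction}.

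\textbf{Recovering $m$.} With $n$ and $k$ known, the number $\binom{n-1}{k-1}$ is known and nonzero. The coefficient of $x^{n-1}$ is $-c_{(k,1^{n-k}),1}(L(G))=-2m\binom{n-1}{k-1}$, so
\[
m=\frac{c_{(k,1^{n-k}),1}(L(G))}{2\binom{n-1}{k-1}} .
\]

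\textbf{Recovering $\sum_i d_i^2$.} For $n\ge 3$, the coefficient of $x^{n-2}$ gives
\[
c_{(k,1^{n-k}),2}(L(G))=F_2(G)\binom{n-1}{k-1}+m\left[\binom{n-2}{k-2}-\binom{n-2}{k-1}\right].
\]
Every quantity here except $F_2(G)$ is already known. Dividing by $\binom{n-1}{k-1}\neq 0$ therefore recovers $F_2(G)$.

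I would then invoke the identity $F_2(G)=2m^2-\tfrac12\sum_i d_i^2$ from Lemma~2.2. That lemma is stated for connected graphs, but this identity is purely algebraic, so I would point out that it holds for every graph:
\[
F_2=\sum_{i<j}d_id_j=\tfrac12\Big[\Big(\sum_i d_i\Big)^2-\sum_i d_i^2\Big],\qquad \sum_i d_i=2m .
\]
Solving gives $\sum_i d_i^2=4m^2-2F_2(G)$.

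\textbf{Small cases.} The only delicate point is that the formula for $c_{(k,1^{n-k}),2}$ requires $n\ge 3$. For $n\le 2$ the graph is $K_1$, $2K_1$ or $K_2$. In each case $m$ already determines the degree sequence, so $\sum_i d_i^2$ is determined as well.

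The main thing to check is that the divisor $\binom{n-1}{k-1}$ never vanishes. This holds for every admissible $k$, so no case split on $k$ is needed. The only other care needed is the remark that Lemma~2.2's $F_2$-identity does not require connectivity.
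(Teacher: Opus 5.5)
Your proposal is correct and is essentially the paper's proof: $n$ comes from the degree, $m=c_{k,1}/\bigl(2\binom{n-1}{k-1}\bigr)$, and $\sum_i d_i^2=4m^2-2F_2(G)$ with $F_2(G)$ solved from $c_{k,2}$, which is exactly the paper's displayed formula. Your remarks that the $F_2$ identity does not need connectivity and that $n\le 2$ can be handled directly are small points the paper leaves implicit; the paper notes only that the binomial convention covers $k=1$.
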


\begin{proof}
The degree determines \(n\). Put \(c_0=\binom{n-1}{k-1}\). Then
\[
m=\frac{c_{k,1}(G)}{2c_0},
\qquad
\sum_{i=1}^{n}d_i^2
=4m^2-\frac{2}{c_0}\left(
c_{k,2}(G)-m\left[\binom{n-2}{k-2}-\binom{n-2}{k-1}\right]\right).
\]
This formula uses the convention \(\binom{a}{b}=0\) for \(b<0\) or
\(b>a\), and therefore also covers \(k=1\).
\end{proof}

\begin{lemma}
(\cite{wu6})
When \(n>7\), the complete graph with \(1,2,3,4\) edges removed is determined by its Laplacian characteristic polynomial.
\end{lemma}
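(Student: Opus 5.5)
The plan is to pass to complements, where the Laplacian spectrum behaves very simply, and so reduce the statement to a finite check on graphs with at most four edges. Write \(G=K_n-E(F)\), where \(F\) is the deleted-edge graph, regarded as a spanning graph on all \(n\) vertices (isolated vertices included) with \(t=|E(F)|\in\{1,2,3,4\}\) edges. Then
\[
L(G)=nI_n-J_n-L(F).
\]
Since \(L(F)\) commutes with \(J_n\) and annihilates the all-ones vector, the following holds. If \(0=\mu_1\le\mu_2\le\dots\le\mu_n\) is the Laplacian spectrum of \(F\), then the Laplacian spectrum of \(G\) is \(0,\,n-\mu_2,\dots,n-\mu_n\). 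Hence the Laplacian characteristic polynomial of \(G\) determines that of \(F\), and conversely.

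Now suppose \(H\) has the same Laplacian characteristic polynomial as \(G\). This is the case \(k=1\) of Lemma~\ref{lem:global-reduction}, so \(H\) has \(n\) vertices and \(\binom n2-t\) edges. Thus \(H=K_n-E(F')\) for a spanning graph \(F'\) with \(t\) edges, and by the previous paragraph \(F\) and \(F'\) are Laplacian cospectral. It therefore suffices to prove the following finite claim: for \(n\ge 8\), two \(n\)-vertex graphs with \(t\le4\) edges that are Laplacian cospectral are isomorphic. The hypothesis \(n>7\) enters here. Any graph with at most four edges has at most eight non-isolated vertices, so every such \(F\) fits inside \(n\) vertices. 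Moreover, padding both graphs with isolated vertices only adds zero eigenvalues, so the comparison is independent of \(n\).

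For the finite claim, list all graphs with \(t\le 4\) edges and no isolated vertices (there are only a few dozen). For each, read off the Laplacian-spectral invariants:
\begin{itemize}
\item the number of connected components, which is the multiplicity of \(0\) (taken after discounting the padded isolated vertices, since the total vertex count is fixed);
\item the number of edges, \(\tfrac12\operatorname{tr}L\);
\item \(\sum d_i^2=\operatorname{tr}L^2-2t\);
\item the count \(\operatorname{tr}L^3\), which involves \(\sum d_i^3\) and the number of triangles;
\item the product of the nonzero eigenvalues, which counts spanning forests weighted by component sizes.
\end{itemize}
Grouping the graphs by (number of nonisolated vertices, components, \(\sum d_i^2\)) leaves very small classes. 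In each class with more than one member, we compare explicit spectra. Typical examples are \(P_5\) versus \(K_{1,3}\cup K_2\) shapes, \(P_3\cup P_3\) versus \(P_4\cup K_2\), and \(C_4\) versus \(K_{1,3}+e\) (the triangle count separates these). Each component's spectrum is known in closed form (paths, stars, cycles, \(K_3\)), so these comparisons are routine.

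The main obstacle is the bookkeeping in this last step: ensuring the enumeration of graphs with four edges is complete and that no two distinct ones share a full spectrum. Note that a forest and a non-forest can share a component count once isolated vertices are absorbed, so the component comparison must be made with the total vertex count held fixed. I expect every pair to be separated already by the component count, \(\sum d_i^2\), and the triangle count via \(\operatorname{tr}L^3\). Any residual tie would be broken by computing the largest Laplacian eigenvalue directly.
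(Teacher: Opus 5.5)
Your argument is correct, and it takes a different route from the paper, which gives no proof of this lemma and simply imports it from the cited reference.

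\textbf{Comparison of routes.} Your identity $L(G)=nI_n-J_n-L(F)$ reduces Laplacian cospectrality of $K_n-E(F)$ and $K_n-E(F')$ to equality of the multisets of nonzero Laplacian eigenvalues of $F$ and $F'$. This turns the statement into a self-contained finite check on the $19$ graphs with between one and four edges and no isolated vertices. It also shows that $n>7$ plays no real role. The hypothesis only guarantees that every such $F$ (e.g.\ $4K_2$) fits into $n$ vertices, and the argument works verbatim for every $n$ for which $F$ exists. The limitation is that the trick is special to $k=1$. Immanants are not invariant under the orthogonal change of basis that produces the eigenvalue shift $\mu\mapsto n-\mu$, so nothing like it is available for $k\ge 2$. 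That is why the paper's main theorem has to work directly with the coefficient formulas of Lemma 2.1.

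\textbf{Repairs needed in the finite check.} Your sketch of the finite check has two inaccuracies, both easy to fix.

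First, your grouping key is not spectral. The multiplicity of $0$ is the total number of components $c(F)$ of the padded graph. With $n$ and $t$ fixed, this determines only the cyclomatic number $t-n+c(F)$, not the number of non-isolated vertices or of nontrivial components separately. Under your stated key, $P_5$ and $K_{1,3}\cup K_2$ (with $5$ versus $6$ non-isolated vertices) fall into different classes and would never be compared. Yet they agree on $t$, on the multiplicity of $0$, and on $\sum d_i^2=14$. (You list this very pair as an example, which is inconsistent with the key you state.) Similarly, the triangle count is not itself a Laplacian invariant; only $\operatorname{tr}L^3=\sum d_i^3+3\sum d_i^2-6N_3$ is, where $N_3$ is the number of triangles. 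In any case $C_4$ and the paw are already separated by $\sum d_i^2$, which is $16$ versus $18$.

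Second, your expectation that the multiplicity of $0$, $\sum d_i^2$ and $\operatorname{tr}L^3$ separate everything is false. $P_4\cup K_2$ and $P_3\cup P_3$ both have $\operatorname{tr}L^j=8,20,56$ for $j=1,2,3$.

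\textbf{Completed check.} Use the correct key: $t$, the cyclomatic number, and $\sum d_i^2$. With it, the only collisions among the $19$ graphs are two pairs of four-edge forests:
\begin{enumerate}
\item $P_5$ versus $K_{1,3}\cup K_2$, separated by $\operatorname{tr}L^3$, which is $68$ versus $74$;
\item $P_4\cup K_2$ versus $P_3\cup P_3$, whose nonzero spectra $\{2-\sqrt2,2,2,2+\sqrt2\}$ and $\{1,1,3,3\}$ differ (products $8$ versus $9$, largest eigenvalues $2+\sqrt2$ versus $3$).
\end{enumerate}
Your fallback of comparing explicit spectra would catch the second pair. With these repairs the proof is complete.
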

\begin{lemma}
(\cite{wu5})
When \(n>7\), the complete graph with 5 edges removed is determined by its Laplacian characteristic polynomial.
\end{lemma}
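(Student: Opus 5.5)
The approach is to pass to complements and reduce the statement to a finite check on graphs with five edges. Write \(G=K_n-E(F)\), where \(F\) is a graph with exactly five edges on the same vertex set \(V(G)\), possibly with isolated vertices. The standard complement relation for Laplacian spectra says the following. If \(0=\mu_1\le\mu_2\le\cdots\le\mu_n\) are the Laplacian eigenvalues of \(F\), then those of \(G=\overline{F}\) are \(0\) and \(n-\mu_i\) for \(2\le i\le n\). Equivalently,
\[
\det\bigl(xI_n-L(G)\bigr)=(-1)^{n-1}\frac{x}{n-x}\,\det\bigl((n-x)I_n-L(F)\bigr).
\]
So for fixed \(n\), the Laplacian characteristic polynomial of \(G\) and that of \(F\) determine each other.

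\textbf{Step 1 (reduction).} Let \(H\) have the same Laplacian characteristic polynomial as \(G\). By Lemma~\ref{lem:global-reduction} with \(k=1\), or directly from the first two coefficients, \(H\) has \(n\) vertices and \(\binom n2-5\) edges, so \(H=K_n-E(F')\) with \(|E(F')|=5\). By the displayed identity, \(F\) and \(F'\) are Laplacian cospectral as \(n\)-vertex graphs. Removing isolated vertices only removes zero eigenvalues. Hence it suffices to show the following: two graphs \(F_0,F_0'\) with five edges, no isolated vertices, and at most \(n\) vertices each, whose nonzero Laplacian eigenvalues coincide with multiplicity, are isomorphic.

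\textbf{Step 2 (invariants).} From the nonzero spectrum I would extract cheap invariants to split the finitely many candidates. These are the edge count, \(\operatorname{tr}L^2=\sum d_i^2+2m\), and \(\operatorname{tr}L^3=\sum d_i^3+3\sum d_i^2-6|\mathscr C_3|\). Fourth-order traces, which involve \(\sum d_i^4\), \(\sum_{uv\in E}d_ud_v\) and \(|\mathscr C_4|\), give further invariants. The number of spanning trees of each component, read off from the product of nonzero eigenvalues, is also available. The number of nontrivial components is \emph{not} directly available, since the zero multiplicity is absorbed by the isolated vertices. So the comparison must be made among graphs with different vertex counts too.

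\textbf{Step 3 (finite verification).} I would list all graphs with five edges and no isolated vertices. There are finitely many, with between \(4\) and \(10\) vertices; these correspond to the labels in Figure~1. I would group them by the degree-square sum and then the triangle count. Within each remaining class I would compare the full characteristic polynomials of the nonzero parts. The hypothesis \(n>7\) matters here in two ways. It ensures that every relevant \(F_0\) fits inside \(n\) vertices. It also ensures that any potential Laplacian-cospectral coincidence among the small graphs with different numbers of vertices cannot arise after padding.

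\textbf{Main obstacle.} The hard step is Step 3: certifying that no two distinct five-edge graphs share the same nonzero Laplacian spectrum. Low-order trace invariants may fail to separate some pairs, for instance a disjoint union versus a connected graph with the same degree multiset. For those pairs I would first compare the product of the nonzero eigenvalues, which counts spanning forests via the matrix-tree theorem, since this typically differs between forests and unicyclic graphs. If a pair still survives, I would fall back on explicitly computing and comparing the polynomials.
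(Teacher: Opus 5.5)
The paper gives no argument for this lemma; it is imported from \cite{wu5} as the $k=1$ base case. Your route is a genuinely different, self-contained one, and it is correct.

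Kelmans' complement identity, which you state correctly, turns the claim into an $n$-independent assertion: the $26$ graphs with five edges and no isolated vertices have pairwise distinct nonzero Laplacian spectra. This buys more than the cited statement. The argument works for every $n$ large enough to host $F$, so $n>7$ is not needed.

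Your account of where $n>7$ enters is wrong on both counts. First, for $n=8$ neither $5K_2$ nor $P_3\cup 3K_2$ fits; this is harmless, since those graphs simply do not occur. Second, padding with isolated vertices cannot create or destroy a coincidence of nonzero spectra.

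What your route cannot do is help for $k\ge 2$. An immanant is not a function of the spectrum, so there is no complement identity there. That is why Section~3 has to compare coefficients directly.

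Step~3 is only announced, but it does close with the invariants you name, plus one free first split. The number of nonzero eigenvalues is $|V(F_0)|-c(F_0)=5-\beta(F_0)$, where $c$ counts components and $\beta$ is the cycle rank. So $\beta$ is known, which leaves $K_4-e$ alone, nine graphs with exactly one cycle, and sixteen forests.

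Within these classes, $\sum d_i^2$ and $\operatorname{tr}L^3$ separate everything except five pairs:
\begin{itemize}
\item $C_5$ and (triangle with a pendant edge)$\,\cup K_2$;
\item $P_4\cup 2K_2$ and $2P_3\cup K_2$;
\item $P_5\cup K_2$ and $P_4\cup P_3$;
\item (spider with legs $2,1,1$)$\,\cup K_2$ and $K_{1,3}\cup P_3$;
\item the two six-vertex spiders with legs $(3,1,1)$ and $(2,2,1)$.
\end{itemize}

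The product of the nonzero eigenvalues equals $\prod_j |V(C_j)|\,\tau(C_j)$ over the nontrivial components. It is not a per-component spanning-tree count, as you wrote. On the first four pairs it gives $25$ vs $24$, $16$ vs $18$, $10$ vs $12$, and $10$ vs $12$.

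The two spiders share their degree sequence and product $6$. However, $\sum_{uv\in E}d_ud_v$ is $18$ versus $19$, so $\operatorname{tr}L^4$ differs; equivalently, their Wiener indices are $32$ and $31$. With this check carried out, your proof is complete, and fourth-order information is needed for exactly one pair.
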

\begin{lemma}
(\cite{zeng1})
All graphs in \(\mathscr{G}_n\) are determined by their second immanantal polynomial.
\end{lemma}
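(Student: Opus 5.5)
The plan is to run the same coefficient-comparison scheme that the paper uses for general \(k\), specialised to \(k=2\), and to work throughout with complements. Let \(G\in\mathscr{G}_n\) and let \(H\) satisfy \(\Phi_2(L(G),x)=\Phi_2(L(H),x)\).

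\textbf{Reduction to a finite list.} By Lemma~\ref{lem:global-reduction}, \(H\in\mathscr{G}_n\). By the lemma recovering \(n\), \(m\) and \(\sum d_i^2\), both graphs have the same order, size and degree-square sum. Write \(G=K_n-E(F)\) and \(H=K_n-E(F')\), where \(F,F'\) each have \(t\le 5\) edges. Since \(d_i=n-1-\bar d_i\), equality of \(\sum d_i^2\) together with \(m\) forces \(\sum \bar d_i^2(F)=\sum\bar d_i^2(F')\). This is the first sieve on the finite list of Figure~1: only pairs of deleted-edge graphs with the same number of edges and the same degree-square sum survive. I would tabulate, for each \(t\le5\), the classes of the \(F_{ij}\) with equal \(\sum\bar d_i^2\).

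\textbf{Third coefficient.} For \(k=2\), Lemma~2.1 gives
\[
c_{2,3}=(n-1)F_3(G)-(n-3)\mathcal{M}_3^1(G)+2(n-4)|\mathscr{C}_3(G)|.
\]
The term \(\mathcal{M}_3^1\) is already fixed by Lemma~2.2. Also by Lemma~2.2, \(F_3\) is an affine function of \(\sum d_i^3\), hence of \(\sum\bar d_i^3\). For the triangles I would use the standard complement count
\[
|\mathscr{C}_3(G)|=\binom n3-(n-2)t+\sum_i\binom{\bar d_i}{2}-|\mathscr{C}_3(F)|.
\]
Substituting these, \(c_{2,3}\) reduces to a fixed quantity plus
\[
\tfrac{n-1}{3}\sum\bar d_i^3\cdot(\pm1)+2(n-4)\bigl(-|\mathscr{C}_3(F)|\bigr).
\]
The signs come out exactly when \(F_3\) is expanded. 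I would evaluate this combination for each surviving pair. For \(n>7\) the coefficients \(n-1\) and \(n-4\) are nonzero and cannot cancel small integer differences. So any pair with different \((\sum\bar d_i^3,|\mathscr{C}_3(F)|)\) is separated, except possibly when \(\frac{n-1}{3}\Delta S_3=2(n-4)\Delta T\). That equation has no solutions for \(n>7\) in the small ranges of \(\Delta S_3\) and \(\Delta T\) that occur, and I would check this directly.

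\textbf{Fourth coefficient and the main obstacle.} The hardest part will be the residual pairs that agree in \(t\), \(\sum\bar d_i^2\), \(\sum\bar d_i^3\) and triangle count. Examples are a \(P_4\) plus an edge versus a \(P_3\) plus two edges, or pairs among the five-edge trees and forests. For these I would use \(c_{2,4}\) from Lemma~2.1 with \(k=2\):
\[
c_{2,4}=(n-1)F_4-(n-3)\mathcal{M}_4^1+2(n-4)\mathcal{C}_4^3-2(n-5)|\mathscr{C}_4|+(n-5)\Bigl[\tbinom m2-\sum\tbinom{d_i}{2}\Bigr].
\]
I would then rewrite \(\sum d_i^4\), \(\sum_{v_iv_j\in E(G)}d_id_j\), \(\sum_j\mathscr{T}_j\) and \(|\mathscr{C}_4(G)|\) in terms of local invariants of \(F\). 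These invariants are \(\sum\bar d_i^4\), \(\sum_{uv\in E(F)}\bar d_u\bar d_v\), the number of \(P_3\)'s and \(P_4\)'s in \(F\), the number of pairs of disjoint edges of \(F\), and the 4-cycles of \(F\). The inclusion–exclusion for \(|\mathscr{C}_4(G)|\) is the messiest step. I would do it by counting 4-cycles of \(K_n\) that use a given small edge configuration of \(F\). Because all these local counts differ among the residual pairs while the polynomial in \(n\) multiplying them is nonzero for \(n>7\), every residual pair is distinguished. This shows that \(F\cong F'\), and hence \(H\cong G\).
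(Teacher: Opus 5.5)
The paper does not prove this statement; it is quoted from \cite{zeng1}. Your proposal is therefore a reconstruction: it is the $k=2$ case of the coefficient scheme of Section~3. The reduction and the third-coefficient step are sound. Since $x^3\equiv x \pmod 6$, one has $\Delta\sum_i\bar d_i^3=6s$ with $s\in\mathbb{Z}$. So $\Delta c_{2,3}=0$ would force $(n-1)s=\pm(n-4)\Delta T$, where $\Delta T=\Delta|\mathscr{C}_3(F)|$ satisfies $|\Delta T|\le 2$. For $n>7$ we have $1<\frac{n-1}{n-4}<2$, so this is impossible unless $s=\Delta T=0$.

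\textbf{The gap: the fourth-coefficient step.} The inference from nonzero local differences and nonzero multipliers to $\Delta c_{2,4}\neq0$ is invalid. $\Delta c_{2,4}$ is a sum of several differences with different polynomial weights, and these can cancel. They already cancel inside $|\mathscr{C}_4(G)|$ for $C_4\cup K_2$ versus $P_6$: Table~5 gives $G_{519}$ and $G_{522}$ the same $4$-cycle count. Moreover, $\sum_i\bar d_i^4$ and the numbers of $P_3$'s and of disjoint edge pairs coincide in every residual pair.

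What is missing is an explicit computation. Put $\sigma_4=\sum_i\bar d_i^4$ and $P=\sum_{uv\in E(F)}\bar d_u\bar d_v$. Let $p_4$ and $c_4$ count the $3$-edge paths and $4$-cycles of $F$, and let $Q$ be the sum, over triangles of $F$, of the $F$-degrees of their vertices. Suppose $t$, $\sum_i\bar d_i^2$, $\sum_i\bar d_i^3$ and $|\mathscr{C}_3(F)|$ agree. Then $\Delta F_4=-\frac14\Delta\sigma_4$, $\Delta\mathcal{M}_4^1=-\Delta P$, $\Delta\mathcal{C}_4^3=2\Delta P-\Delta Q$, $\Delta|\mathscr{C}_4(G)|=\Delta c_4-\Delta p_4$, and the $2$-matching term is unchanged. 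Hence
\[
\Delta c_{2,4}=-\tfrac{n-1}{4}\,\Delta\sigma_4-(3n-13)\,\Delta P+2(n-4)\,\Delta Q+2(n-5)\,(\Delta p_4-\Delta c_4).
\]
There are exactly six residual pairs of complements:
\begin{enumerate}
\item[(1)] $2P_3$ versus $P_4\cup K_2$;
\item[(2)] $P_4\cup 2K_2$ versus $2P_3\cup K_2$;
\item[(3)] the chair ($K_{1,3}$ with one edge subdivided) plus $K_2$ versus $K_{1,3}\cup P_3$;
\item[(4)] $C_4\cup K_2$ versus $P_6$;
\item[(5)] the spiders with legs $3,1,1$ versus $2,2,1$;
\item[(6)] $P_4\cup P_3$ versus $P_5\cup K_2$.
\end{enumerate}
These are the pairs $\{G_{42},G_{48}\}$, $\{G_{50},G_{51}\}$, $\{G_{53},G_{57}\}$, $\{G_{519},G_{522}\}$, $\{G_{55},G_{523}\}$, $\{G_{511},G_{517}\}$ of the tables. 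In all of them $\Delta\sigma_4=\Delta Q=0$. The triples $(\Delta P,\Delta p_4,\Delta c_4)$ are $(-1,-1,0)$, $(1,1,0)$, $(2,2,0)$, $(1,1,1)$, $(-1,-1,0)$, $(-1,-1,0)$. This gives $\Delta c_{2,4}=n-3$, $-(n-3)$, $-2(n-3)$, $-(3n-13)$, $n-3$, $n-3$, all nonzero. With this table your argument closes.

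\textbf{A sign error inherited from Lemma~2.1.} Your displayed $c_{2,3}$ and $c_{2,4}$ carry the wrong sign on the triangle terms. The value of $\chi_{(k,1^{n-k})}$ on a $3$-cycle is the coefficient of $q^{n-k}$ in $(1+q^3)(1+q)^{n-4}$, namely $\binom{n-4}{k-4}+\binom{n-4}{k-1}$. For $k=2$ this is $\operatorname{sgn}(\pi)(\operatorname{fix}(\pi)-1)=n-4$. So the correct terms are $-2(n-4)|\mathscr{C}_3(G)|$ and $-2(n-4)\mathcal{C}_4^3(G)$, and the formula above uses these.

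As a check, take $K_3\cup 2K_1$ with $n=5$. Direct expansion gives $\Phi_2(L,x)=x^2\bigl[4(x-2)^3-6(x-2)+2\bigr]=4x^5-24x^4+42x^3-18x^2$, so $c_{2,3}=18$. Your expression gives $32-12+2=22$.

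With the uncorrected signs the six values would be $\pm(7n-29)$, $2(7n-29)$ and $5n-19$, also nonzero. So the conclusion survives, but the certificate must be computed with the correct formula.
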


Wu and Zhou \cite{wu2}  computed the sum of squares of degrees for graphs in \(\mathcal{G}_n\) (see Table 1). For the convenience of subsequent proofs, we further calculated the sum of cubes of degrees, the sum of the products of the degrees of vertices on every cycle of length 3, and the sum of the products of the degrees of the two endpoints over all edges for several graphs in \(\mathcal{G}_n\), as listed in Table 2 and Table 3 respectively.
\begin{table}[htbp]
  \centering
  \setlength{\tabcolsep}{2pt}
  \caption{\(\sum\limits_{i=1}^{n} d_i^2\):The sum of squares of degrees  in \(\mathscr{G}_n\)  (Source: \cite{wu2})}
  \begin{tabular}{|c|c|c|c|}
\hline
Graph  & $\sum_{i=1}\limits^n(d_i)^2$ & Graph  & $\sum_{i=1}\limits^n(d_i)^2$ \\
\hline
$G_{20}$ & $n^3 - 2n^2 - 7n + 14$ &$G_{30}, G_{32}$ & $n^3 - 2n^2 - 11n + 24$ \\
 \hline
$G_{33}$ & $n^3 - 2n^2 - 11n + 22$ &$G_{40}$ & $n^3 - 2n^2 - 15n + 36$ \\
 \hline
$G_{42}, G_{48}$ & $n^3 - 2n^2 - 15n + 28$ &$G_{44}, G_{410}$ & $n^3 - 2n^2 - 15n + 32$ \\
 \hline
$G_{49}$ & $n^3 - 2n^2 - 15n + 24$ &$G_{52}$ & $n^3 - 2n^2 - 19n + 32$ \\
 \hline
$G_{54}$ & $n^3 - 2n^2 - 19n + 50$&$G_{56}, G_{512}, G_{515}, G_{525}$ & $n^3 - 2n^2 - 19n + 42$ \\
 \hline
$G_{58}, G_{511}, G_{514}, G_{517}$ & $n^3 - 2n^2 - 19n + 36$ &$G_{510}, G_{524}$ & $n^3 - 2n^2 - 19n + 44$ \\
 \hline
$G_{21}$ & $n^3 - 2n^2 - 7n + 12$ &$G_{31}$ & $n^3 - 2n^2 - 11n + 20$ \\
 \hline
$G_{34}$ & $n^3 - 2n^2 - 11n + 18$ &$G_{41}, G_{45}, G_{47}$ & $n^3 - 2n^2 - 15n + 30$ \\
 \hline
$G_{43}$ & $n^3 - 2n^2 - 15n + 26$ &$G_{46}$ & $n^3 - 2n^2 - 15n + 34$ \\
 \hline
$G_{50}, G_{51}$ & $n^3 - 2n^2 - 19n + 34$ &$G_{59}, G_{516}$ & $n^3 - 2n^2 - 19n + 46$ \\
 \hline
$G_{55}, G_{518}, G_{520}, G_{523}$ & $n^3 - 2n^2 - 19n + 40$ &$G_{521}$ & $n^3 - 2n^2 - 19n + 30$ \\
 \hline
 $G_{53}, G_{57}, G_{513}, G_{519}, G_{522}$ & $n^3 - 2n^2 - 19n + 38$ &$  $&$  $\\
\hline
\end{tabular}
\end{table}
\begin{lemma}
(\cite{Bru1})
Let n be a positive integer. Then the binomial coefficient sequence
$$\binom{n}{0},\,\binom{n}{1},\,\binom{n}{2},\,\dots,\,\binom{n}{n}$$
is unimodal.
\end{lemma}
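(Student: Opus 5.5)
The statement just cited, Lemma~2.8 (from \cite{Bru1}), asserts that the row $\binom{n}{0},\dots,\binom{n}{n}$ is unimodal. I will prove it by comparing consecutive terms directly. For $1\leq j\leq n$ the ratio of consecutive terms is
\[
\frac{\binom{n}{j}}{\binom{n}{j-1}}=\frac{n!/(j!(n-j)!)}{n!/((j-1)!(n-j+1)!)}=\frac{n-j+1}{j}.
\]
All terms are positive, so this ratio decides whether the sequence rises or falls at each step.

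First, I note that the ratio is at least $1$ exactly when $n-j+1\geq j$, that is, when $j\leq (n+1)/2$. Second, I will deduce
\[
\binom{n}{0}\leq\binom{n}{1}\leq\cdots\leq\binom{n}{\lfloor n/2\rfloor}
\quad\text{and}\quad
\binom{n}{\lceil n/2\rceil}\geq\cdots\geq\binom{n}{n}.
\]
Third, I will check the middle of the row:
\begin{itemize}
\item When $n$ is even, the two chains share the single peak $\binom{n}{n/2}$.
\item When $n$ is odd, the two central terms are equal, since the ratio is exactly $1$ at $j=(n+1)/2$.
\end{itemize}
In either case the sequence is weakly increasing and then weakly decreasing, which is the definition of unimodality. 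As an alternative check on the symmetric half, I can use $\binom{n}{j}=\binom{n}{n-j}$ to obtain the decreasing chain from the increasing one.

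No step is a real obstacle; the argument is routine. The only point needing care is the parity split at the middle index. In particular, for odd $n$ I should not claim strict increase up to a unique peak.

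The way the paper will later use this lemma deserves a remark, because that is where the difficulty actually lies. The coefficient differences appearing in Lemma~2.1, such as $\binom{n-2}{k-2}-\binom{n-2}{k-1}$, $\binom{n-4}{k-4}-\binom{n-4}{k-1}$ and $\binom{n-5}{k-5}-2\binom{n-5}{k-3}+\binom{n-5}{k-1}$, have signs governed by where $k$ lies relative to the middle of the corresponding row. Symmetry shows that the first difference vanishes exactly when $k-2=(n-2)-(k-1)$, that is, $n=2k-1$. Unimodality then gives nonzero differences whenever $n\neq 2k-1$, which is what makes the third- and fourth-coefficient comparisons separate the candidate graphs.
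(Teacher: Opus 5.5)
Your proof is correct and matches the paper's approach: the paper gives no argument and only cites the textbook result, and your comparison of consecutive terms via $\binom{n}{j}/\binom{n}{j-1}=(n-j+1)/j$, with the parity split at the middle, is the standard proof of it. Two cautions about your closing remark, not the lemma itself. First, concluding that the differences are nonzero whenever $n\neq 2k-1$ needs \emph{strict} monotonicity on each side of the middle (the ratio is $\neq 1$ unless $j=(n+1)/2$), which your computation gives but weak unimodality alone would not. Second, the second difference $\binom{n-5}{k-5}-2\binom{n-5}{k-3}+\binom{n-5}{k-1}$ is not sign-controlled by unimodality (it is negative at $n=2k-1$); this is harmless, since its coefficient depends only on $m$ and $\sum_i d_i^2$ and so cancels in the paper's comparisons.
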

\begin{lemma}
Let \(n,k\) be two positive integers with \(k\ge 5\). Then
$$
\begin{cases}
\binom{n-4}{k-4} - \binom{n-4}{k-1}<0,\binom{n-5}{k-5} - \binom{n-5}{k-1}<0&n\geq2k, \\
\binom{n-4}{k-4} - \binom{n-4}{k-1}=0,\binom{n-5}{k-5} - \binom{n-5}{k-1}=0&n\doteq2k-1, \\
\binom{n-4}{k-4} - \binom{n-4}{k-1}>0,\binom{n-5}{k-5} - \binom{n-5}{k-1}>0 &n\leq2k-2.
\end{cases}
$$

\end{lemma}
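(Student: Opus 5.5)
We want the sign of $D_j(n,k)=\binom{n-j}{k-j}-\binom{n-j}{k-1}$ for $j=4,5$. The plan is to reduce each comparison to the symmetry and unimodality of a single row of binomial coefficients (the preceding unimodality lemma).

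Write $N=n-j$. By symmetry, $\binom{N}{k-j}=\binom{N}{N-k+j}=\binom{N}{n-k}$. So we are comparing $\binom{N}{n-k}$ with $\binom{N}{k-1}$ in the same row. Since $k\ge5\ge j$, both lower indices are nonnegative whenever the coefficients are nonzero. Also, by the convention $\binom{a}{b}=0$ for $b<0$ or $b>a$, the boundary cases $n<k$ cannot occur because $k\le n$.

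\emph{Equality case.} If $n=2k-1$, then $n-k=k-1$, so the two terms coincide and $D_j=0$.

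\emph{Symmetric comparison in general.} The row $\binom{N}{i}$ is symmetric about $N/2$ and, by the unimodality lemma, nondecreasing up to the middle and nonincreasing after it. In fact it is strictly monotone away from the middle, since $\binom{N}{i+1}/\binom{N}{i}=(N-i)/(i+1)$. Hence, for indices $a,b\in[0,N]$, we have $\binom{N}{a}>\binom{N}{b}$ iff $|a-N/2|<|b-N/2|$. Indices outside $[0,N]$ give $0$, which is handled separately.

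Compute the distances from the centre:
\[
n-k-\tfrac{N}{2}=\tfrac{n+j}{2}-k,\qquad k-1-\tfrac{N}{2}=k-1-\tfrac{n-j}{2}.
\]

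\emph{Case $n\ge 2k$.} Here $n-k\ge k$, so $n-k$ lies on the far side and we compare distances. The difference of absolute values comes out to $(n+j)/2-k$ versus $|k-1-(n-j)/2|$. It is cleaner to argue directly: the indices $k-j$ and $k-1$ satisfy $k-j<k-1$. Their midpoint is $(2k-j-1)/2$, and
\[
\frac{2k-j-1}{2}<\frac{N}{2}=\frac{n-j}{2}\iff n>2k-1.
\]
So when $n\ge 2k$, the index $k-1$ is nearer the centre than $k-j$. Equivalently, $k-j$ lies strictly farther left than $k-1$ does from the mirror point. Strict unimodality then gives $\binom{N}{k-j}<\binom{N}{k-1}$, provided $k-1\le N$, i.e.\ $n\ge k+j-1$. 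This holds since $n\ge 2k\ge k+4$ for $k\ge5$. Thus $D_j<0$.

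\emph{Case $n\le 2k-2$.} Now the midpoint lies to the right of $N/2$, so $k-j$ is nearer the centre. Then $\binom{N}{k-j}>\binom{N}{k-1}$ as long as $\binom{N}{k-j}\ne0$, i.e.\ $0\le k-j\le N$. This is true because $k\ge5$ and $n\ge k$. If $k-1>N$ the second term is simply $0$, so again $D_j>0$.

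\emph{Main obstacle.} The only delicate point is strictness together with the boundary indices. Strictness fails only when the two indices are symmetric about $N/2$, which is exactly $n=2k-1$, and the vanishing conventions must be checked at the edges of the row. I would verify the ratio formula to get strict monotonicity on each side of the centre, and then treat the zero cases explicitly.
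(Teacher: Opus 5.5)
Your proof is correct, but it is organized differently from the paper's.

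The paper applies (weak) unimodality only when both indices $k-4$ and $k-1$ lie on the same side of the centre $(n-4)/2$, which settles $n\ge 2k+2$ and $n\le 2k-4$. The four straddling values $n=2k-3,\,2k-2,\,2k,\,2k+1$ are then listed with their signs simply asserted. The $\binom{n-5}{\cdot}$ difference is dismissed with ``similarly'', even though there the straddling range is larger, namely $2k-4\le n\le 2k+2$.

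You instead use two facts:
\begin{enumerate}
\item the symmetry $\binom{N}{k-j}=\binom{N}{n-k}$;
\item the observation, proved via the ratio $(N-i)/(i+1)$, that entries of a row strictly decrease with distance from $N/2$.
\end{enumerate}
This reduces everything to whether the midpoint $(2k-j-1)/2$ of the two indices lies left or right of $N/2$, i.e.\ to the sign of $n-(2k-1)$. Your route gives one uniform argument for both $j=4,5$ and all $n$, with no residual cases. It also supplies the strictness that bare unimodality does not give. Finally, it shows transparently why $n=2k-1$ is the only zero: the two indices are then mirror images.

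One remark: the hypothesis $n\ge k$ that you invoke is genuinely needed but is not stated in the lemma. For $n<k$ both binomials in each difference vanish, so the third case fails; for example $n=5$, $k=6$ gives $\binom{1}{2}-\binom{1}{5}=0$. The paper's proof uses $n\ge k$ tacitly as well, and it holds in all of the paper's applications since $1\le k\le n$.
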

\begin{proof}
Let
\(A = \binom{n-4}{k-4} - \binom{n-4}{k-1},\quad
B = \binom{n-5}{k-5} - \binom{n-5}{k-1}.\)
The case \(n=2k-1\) is straightforward to verify. By Lemma 2.7, the binomial coefficient sequence is unimodal.
If \(k-1 \le \dfrac{n-4}{2}\), i.e., \(n \ge 2k+2\), then \(A < 0\).
If \(k-4 \ge \dfrac{n-4}{2}\), i.e., \(n \le 2k-4\), then \(A > 0\).
When \(n=2k-3\):
\(A = \binom{2k-7}{k-4} - \binom{2k-7}{k-1} > 0.\)
When \(n=2k-2\):
\(A = \binom{2k-4}{k-4} - \binom{2k-4}{k-1} > 0.\)
When \(n=2k\):
\(A = \binom{2k-4}{k-4} - \binom{2k-4}{k-1} < 0.\)
When \(n=2k+1\):
\(A = \binom{2k-3}{k-4} - \binom{2k-3}{k-1} < 0.\)To summarize:
\(A < 0\) whenever \(n \ge 2k\), and \(A > 0\) whenever \(n \le 2k-2\).Similarly, we deduce:
\(B < 0\) for \(n \ge 2k\),
\(B = 0\) for \(n = 2k-1\),
\(B > 0\) for \(n \le 2k-2\).
\end{proof}

\begin{cor} For any positive integers \(a,b,c,n,k\) with \(n\ge k\), \(k\ge 5\) and \(n\neq 2k-1\), we have
$$
\frac{a(2k-n-1)}{k-1}\binom{n-2}{k-2}+b[\binom{n-4}{k-4}-\binom{n-4}{k-1}]+c[\binom{n-5}{k-5}-\binom{n-5}{k-1}]
\neq 0.
$$
\end{cor}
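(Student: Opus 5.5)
The plan is to show that all three summands are nonzero and share the same sign, namely the sign of $2k-1-n$. A sum of nonzero terms of one sign cannot vanish.

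\emph{The first summand.} I would start from the identity
\[
\binom{n-2}{k-1}=\frac{n-k}{k-1}\binom{n-2}{k-2}.
\]
It gives
\[
\binom{n-2}{k-2}-\binom{n-2}{k-1}=\frac{2k-n-1}{k-1}\binom{n-2}{k-2}.
\]
So the first summand is exactly $a\bigl[\binom{n-2}{k-2}-\binom{n-2}{k-1}\bigr]$, which is the same kind of binomial difference that appears in Lemma~2.1. Since $n\ge k\ge 5$, we have $0\le k-2\le n-2$, so $\binom{n-2}{k-2}\ge 1$. Also $k-1>0$ and $a>0$. Hence the first summand is strictly positive when $n\le 2k-2$ and strictly negative when $n\ge 2k$. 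The excluded case $n=2k-1$ is precisely where it vanishes.

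\emph{The second and third summands.} These are $b\,A$ and $c\,B$, where $A=\binom{n-4}{k-4}-\binom{n-4}{k-1}$ and $B=\binom{n-5}{k-5}-\binom{n-5}{k-1}$ are the quantities of Lemma~2.8. That lemma says $A,B<0$ for $n\ge 2k$ and $A,B>0$ for $n\le 2k-2$. Since $b,c>0$, the terms $bA$ and $cB$ have the same strict sign as the first summand in each case. Therefore the whole expression is strictly negative for $n\ge 2k$ and strictly positive for $n\le 2k-2$. In particular it is nonzero whenever $n\neq 2k-1$.

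\emph{Main obstacle.} The only delicate point is whether Lemma~2.8 really holds over the full range $k\le n\le 2k-2$ allowed here. At the low end, terms such as $\binom{n-4}{k-1}$ vanish by the convention $\binom{a}{b}=0$ for $b>a$, so the unimodality argument must not be applied blindly. I would check this range directly. For $k\le n\le 2k-2$ we have $0\le k-4\le n-4$, so $\binom{n-4}{k-4}\ge1$. Moreover, by symmetry of binomial coefficients, $k-4$ is closer to the middle $(n-4)/2$ than $k-1$ is, or $\binom{n-4}{k-1}=0$ outright. So $A>0$ follows from unimodality, or trivially from the vanishing. The same check works for $B$ using $k\ge5$. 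With these boundary cases confirmed, the sign argument above completes the proof.
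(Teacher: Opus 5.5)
Your proof is correct and follows the paper's argument. You split into $n\ge 2k$ and $n\le 2k-2$, observe that $\frac{2k-n-1}{k-1}\binom{n-2}{k-2}$ has the strict sign of $2k-1-n$, and use Lemma~2.8 to give both bracketed differences the same strict sign, so the sum cannot vanish; your additional checks (that $\binom{n-2}{k-2}\ge 1$, and that Lemma~2.8 holds on the low range $k\le n\le 2k-2$, where $\binom{n-4}{k-1}$ or $\binom{n-5}{k-1}$ may vanish) supply details the paper leaves implicit.
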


\begin{proof}
If \(n\ge 2k\), then
$\frac{2k-n-1}{k-1}<0, \binom{n-4}{k-4}-\binom{n-4}{k-1}<0, \binom{n-5}{k-5}-\binom{n-5}{k-1}<0,$
so
$\frac{a(2k-n-1)}{k-1}\binom{n-2}{k-2}+b[\binom{n-4}{k-4}-\binom{n-4}{k-1}]+c[\binom{n-5}{k-5}-\binom{n-5}{k-1}]
< 0.$
If \(n\le 2k-2\), then
$\frac{2k-n-1}{k-1}>0, \binom{n-4}{k-4}-\binom{n-4}{k-1}>0, \binom{n-5}{k-5}-\binom{n-5}{k-1}>0,$
so
$\frac{a(2k-n-1)}{k-1}\binom{n-2}{k-2}+b[\binom{n-4}{k-4}-\binom{n-4}{k-1}]+c[\binom{n-5}{k-5}-\binom{n-5}{k-1}]$
Therefore,
$\frac{a(2k-n-1)}{k-1}\binom{n-2}{k-2}+b[\binom{n-4}{k-4}-\binom{n-4}{k-1}]+c[\binom{n-5}{k-5}-\binom{n-5}{k-1}]\neq 0.$
\end{proof}

\begin{table}[htbp]
  \centering
  \setlength{\tabcolsep}{2pt}
   \caption{$\sum_{i=1}\limits^n(d_i)^3$:The sum of cubes of degrees in \(\mathscr{G}_n\)}
  \begin{tabular}{|c|c|c|c|}
    \hline
    Graph & $\sum_{i=1}\limits^n(d_i)^3$ & Graph  & $\sum_{i=1}\limits^n(d_i)^3$ \\
    \hline
    $G_{30}$ & $n^4-3n^3-15n^2+71n-84$ & $G_{511},G_{514},G_{517}$ & $n^4-3n^3-27n^2+107n-106$ \\
    \hline
    $G_{32}$ & $n^4-3n^3-15n^2+71n-78$ & $G_{512},G_{515}$ & $n^4-3n^3-27n^2+125n-148$ \\
    \hline
    $G_{41}$ & $n^4-3n^3-21n^2+89n-98$ & $G_{513},G_{519},G_{522}$ & $n^4-3n^3-27n^2+113n-118$ \\
    \hline
    $G_{42},G_{48}$ & $n^4-3n^3-21n^2+83n-80$ & $G_{516}$ & $n^4-3n^3-27n^2+137n-178$ \\
    \hline
    $G_{44}$ & $n^4-3n^3-21n^2+95n-104$ & $G_{518},G_{523},G_{55}$ & $n^4-3n^3-27n^2+119n-136$ \\
    \hline
    $G_{45},G_{47}$ & $n^4-3n^3-21n^2+89n-92$ & $G_{520}$ & $n^4-3n^3-27n^2+119n-130$ \\
    \hline
    $G_{410}$ & $n^4-3n^3-21n^2+95n-110$ & $G_{524}$ & $n^4-3n^3-27n^2+131n-178$ \\
    \hline
    $G_{50},G_{51}$ & $n^4-3n^3-27n^2+101n-94$ & $G_{525}$ & $n^4-3n^3-27n^2+125n-154$ \\
    \hline
    $G_{53},G_{57}$ & $n^4-3n^3-27n^2+113n-124$ & $G_{59}$ & $n^4-3n^3-27n^2+137n-190$ \\
    \hline
    $G_{56}$ & $n^4-3n^3-27n^2+125n-166$ & $G_{510}$ & $n^4-3n^3-27n^2+131n-166$ \\
    \hline
    $G_{58}$ & $n^4-3n^3-27n^2+107n-112$ & $  $&$  $\\
    \hline
  \end{tabular}
\end{table}

\begin{lemma}
Let \(n,k\) be two positive integers with \(n \ge k\) and \(n \ne k-1\). Then
$$\binom{n-1}{k-1} \ge \binom{n-4}{k-4}.$$
\end{lemma}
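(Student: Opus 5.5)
Here \(n\ge k\) is assumed; the hypothesis \(n\ne k-1\) is then automatic. The natural route is the convention \(\binom{a}{b}=0\) for \(b<0\) or \(b>a\), together with the monotonicity of binomial coefficients along a diagonal. I will compare \(\binom{n-1}{k-1}\) and \(\binom{n-4}{k-4}\) by passing through the chain
\[
\binom{n-4}{k-4}\le\binom{n-3}{k-3}\le\binom{n-2}{k-2}\le\binom{n-1}{k-1},
\]
so it suffices to prove the single-step inequality \(\binom{a}{b}\le\binom{a+1}{b+1}\) for all integers \(a\ge 0\) and all \(b\), together with a separate check of the degenerate cases.

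\textbf{Degenerate cases.} If \(k\le 3\), then \(k-4<0\), so \(\binom{n-4}{k-4}=0\). Meanwhile \(\binom{n-1}{k-1}\ge 1\), since \(0\le k-1\le n-1\). The inequality therefore holds trivially.

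If \(k\ge 4\), then \(n\ge k\ge 4\). Hence all the upper indices \(n-4,\dots,n-1\) are nonnegative and all the lower indices \(k-4,\dots,k-1\) are nonnegative. In every step of the chain the lower index \(b\) satisfies \(0\le b\le a\), because \(k-j\le n-j\).

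\textbf{Single step.} For \(0\le b\le a\), Pascal's rule gives
\[
\binom{a+1}{b+1}=\binom{a}{b}+\binom{a}{b+1}\ge\binom{a}{b}.
\]
The term \(\binom{a}{b+1}\) is nonnegative, and it is zero when \(b=a\). Alternatively one can use the ratio \(\binom{a+1}{b+1}/\binom{a}{b}=(a+1)/(b+1)\ge1\).

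Applying the single step three times, with \((a,b)=(n-4,k-4)\), \((n-3,k-3)\) and \((n-2,k-2)\), yields the claim.

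There is no real obstacle here. The only care needed is with the boundary conventions: for \(k<4\) the right-hand side vanishes, and for \(n=k\) every coefficient in the chain equals \(1\), so equality holds. The stated exclusion \(n\ne k-1\) is presumably included only to guarantee \(\binom{n-1}{k-1}\) is a genuine coefficient, and it is already implied by \(n\ge k\).
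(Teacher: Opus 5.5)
Your proof is correct and is essentially the paper's argument: the paper treats \(k\le 3\) as trivial, and for \(k\ge 4\) it computes \(\binom{n-1}{k-1}/\binom{n-4}{k-4}=\frac{(n-1)(n-2)(n-3)}{(k-1)(k-2)(k-3)}\ge 1\), which is exactly the product of your three single-step ratios \((a+1)/(b+1)\). Your observation that the hypothesis \(n\ne k-1\) is already implied by \(n\ge k\) is also right.
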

\begin{proof}
The cases \(k=1,2,3\) are trivial to verify.

For \(k \ge 4\), direct computation yields:
\[
\binom{n-1}{k-1} = \frac{(n-1)!}{(k-1)!(n-k)!}, \quad \binom{n-4}{k-4} = \frac{(n-4)!}{(k-4)!(n-k)!}
\]
Thus
\[
\frac{\binom{n-1}{k-1}}{\binom{n-4}{k-4}} = \frac{(n-1)(n-2)(n-3) \cdot (n-4)!}{(k-1)(k-2)(k-3) \cdot (k-4)!} \cdot \frac{(k-4)!}{(n-4)!} = \frac{(n-1)(n-2)(n-3)}{(k-1)(k-2)(k-3)}
\]
Since \(n \ge k \ge 4\) and \(n,k\) are positive integers, we have
\[
n-1 \geq k-1 > 0, \quad n-2 \geq k-2 > 0, \quad n-3 \geq k-3 > 0
\]
Hence
$\displaystyle \frac{(n-1)(n-2)(n-3)}{(k-1)(k-2)(k-3)} \geq 1$.

To sum up,
~$\displaystyle \binom{n-1}{k-1} \geq \binom{n-4}{k-4}~.$
\end{proof}

\begin{table}[htbp]
  \centering
  \setlength{\tabcolsep}{2pt}
  \caption{The sum of the products of the degrees of all vertices of cycles of length 3 and the sum of the products of the degrees of all edges in \(\mathscr{G}_n\)}.
  \begin{tabular}{|l|c|c|}
    \toprule
    Graph & $ \sum_i\limits \mathscr{T}_j(G)$& $\sum_{v_iv_j\in E(G)}\limits d_{v_i}d_{v_j}$ \\
    \midrule
    $G_{42}$ & $(1/2)n^4 - 2n^3 - (27/2)n^2 + 65n - 50$ & $(1/2)n^4 - (3/2)n^3 - (21/2)n^2 + (71/2)n - 6$ \\
    \hline
    $G_{50}$ & $(1/2)n^4 - 2n^3 - (35/2)n^2 + 79n - 48$ & $(1/2)n^4 - (3/2)n^3 - (27/2)n^2 + (87/2)n + 4$ \\
    \hline
    $G_{53}$ & $(1/2)n^4 - 2n^3 - (35/2)n^2 + 89n - 79$ & $(1/2)n^4 - (3/2)n^3 - (27/2)n^2 + (95/2)n - 7$ \\
    \hline
    $G_{55}$ & $(1/2)n^4 - 2n^3 - (35/2)n^2 + 94n - 94$ & $(1/2)n^4 - (3/2)n^3 - (27/2)n^2 + (99/2)n - 13$ \\
    \hline
    $G_{57}$ & $(1/2)n^4 - 2n^3 - (35/2)n^2 + 89n - 75$& $(1/2)n^4 - (3/2)n^3 - (27/2)n^2 + (95/2)n - 5$ \\
    \hline
    $G_{519}$ & $(1/2)n^4 - 2n^3 - (35/2)n^2 + 89n - 80$ & $(1/2)n^4 - (3/2)n^3 - (27/2)n^2 + (95/2)n - 9$ \\
    \hline
    $G_{511}$ & $(1/2)n^4 - 2n^3 - (35/2)n^2 + 84n - 61$ & $(1/2)n^4 - (3/2)n^3 - (27/2)n^2 + (91/2)n - 1$ \\
    \hline
    $G_{48}$ & $(1/2)n^4 - 2n^3 - (27/2)n^2 + 65n - 52$ & $(1/2)n^4 - (3/2)n^3 - (21/2)n^2 + (71/2)n - 7$ \\
    \hline
    $G_{51}$ & $(1/2)n^4 - 2n^3 - (35/2)n^2 + 79n - 46$ & $(1/2)n^4 - (3/2)n^3 - (27/2)n^2 + (87/2)n + 5$ \\
    \hline
    $G_{522}$ & $(1/2)n^4 - 2n^3 - (35/2)n^2 + 89n - 78$ & $(1/2)n^4 - (3/2)n^3 - (27/2)n^2 + (95/2)n - 8$ \\
    \hline
    $G_{517}$ & $(1/2)n^4 - 2n^3 - (35/2)n^2 + 84n - 63$ & $(1/2)n^4 - (3/2)n^3 - (27/2)n^2 + (91/2)n - 2$ \\
    \hline
    $G_{523}$ & $(1/2)n^4 - 2n^3 - (35/2)n^2 + 94n - 96$ & $(1/2)n^4 - (3/2)n^3 - (27/2)n^2 + (99/2)n - 14$ \\
    \hline
  \end{tabular}
\end{table}

\begin{lemma}
Let \(n,k\) be two positive integers with \(n \ge k\) and \(n \ne k-1\). Then
$$\binom{n-1}{k-1} \ge \binom{n-4}{k-1}.$$
\end{lemma}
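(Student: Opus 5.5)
The plan is to compare \(\binom{n-1}{k-1}\) and \(\binom{n-4}{k-1}\) directly, since both have the same lower index \(k-1\). We use the convention of Lemma 2.1 that \(\binom{a}{b}=0\) whenever \(b<0\) or \(b>a\). The hypothesis \(n\ne k-1\) is irrelevant once \(n\ge k\) holds; we only need \(n\ge k\ge 1\). We split into cases according to whether the right-hand side vanishes.

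\emph{Case 1: \(k-1>n-4\), or \(n-4<0\).} Then \(\binom{n-4}{k-1}=0\) by the stated convention. Since \(0\le k-1\le n-1\), we have \(\binom{n-1}{k-1}\ge 1>0\), so the inequality holds.

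\emph{Case 2: \(0\le k-1\le n-4\).} Both coefficients are genuine binomial coefficients. One route is Pascal's rule applied three times,
\[
\binom{n-1}{k-1}=\binom{n-2}{k-1}+\binom{n-2}{k-2}\ge\binom{n-2}{k-1}\ge\binom{n-3}{k-1}\ge\binom{n-4}{k-1},
\]
where each step discards a nonnegative term. A second route mirrors the ratio computation in the proof of the previous lemma:
\[
\frac{\binom{n-1}{k-1}}{\binom{n-4}{k-1}}=\frac{(n-1)(n-2)(n-3)}{(n-k)(n-k-1)(n-k-2)}.
\]
Here \(n-k-2\ge 1\) because \(k-1\le n-4\), and each factor in the numerator is at least the corresponding factor in the denominator, since \(k\ge 1\). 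So the ratio is at least \(1\).

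The Pascal route is the cleaner one, because it automatically handles the boundary situations where some intermediate coefficient is zero. The only real obstacle is bookkeeping: the convention for out-of-range binomial coefficients must cover the small cases \(n\le 3\) and \(k-1>n-4\). Isolating these in Case 1 settles them.
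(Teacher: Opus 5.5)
Your proof is correct and follows the paper's route: you split off the case where \(\binom{n-4}{k-1}\) vanishes by convention, and otherwise you use the same ratio \(\frac{(n-1)(n-2)(n-3)}{(n-k)(n-k-1)(n-k-2)}\ge 1\); your Pascal chain is an equally valid alternative. Your case boundary \(k-1>n-4\) is slightly more accurate than the paper's ``\(n\le 4\) or \(n<k+3\)'': the paper's condition misclassifies \(n=4,k=1\), where \(\binom{0}{0}=1\neq 0\), although the inequality still holds there.
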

\begin{proof}
If \(n \le 4\) or \(n < k+3\), then \(\dbinom{n-4}{k-1}=0\), and \(\dbinom{n-1}{k-1}\ge 0\). Hence
\(\binom{n-1}{k-1} \ge \binom{n-4}{k-1}.\)

For \(n \ge k+3\) and \(n>4\), expand via the definition of binomial coefficients:
\[
\binom{n-1}{k-1} = \frac{(n-1)!}{(k-1)!(n-k)!}, \quad \binom{n-4}{k-1} = \frac{(n-4)!}{(k-1)!(n-k-3)!}
\]
take the ratio of the two expressions:
\begin{align*}
\frac{\binom{n-1}{k-1}}{\binom{n-4}{k-1}} =& \frac{(n-1)(n-2)(n-3) \cdot (n-4)!}{(n-k)(n-k-1)(n-k-2)(n-k-3)!} \cdot \frac{(n-k-3)!}{(n-4)!}\\
=&\frac{(n-1)(n-2)(n-3)}{(n-k)(n-k-1)(n-k-2)}
\end{align*}
Since \(n \ge k\) and \(n,k\) are positive integers, we have~$\frac{(n-1)(n-2)(n-3)}{(n-k)(n-k-1)(n-k-2)}\geq1.$~
Therefore,~$\binom{n-1}{k-1} \geq \binom{n-4}{k-1}$~.

\end{proof}
\section{Proof of Theorem 1.1}
From Lemmas 2.4, 2.5 and 2.6, we know that Theorem 1.1 holds for any hook partition \(\lambda=(k,1^{n-k})\) with \(k=1,2\). Therefore, to prove Theorem 1.1, it suffices to treat the case \(k\ge 3\). We outline the main proof idea as follows.According to Table 3, the graph class \(\mathscr{G}_n\) is partitioned into distinct groups by the sum of squares of degrees of its graphs. By Lemmas 2.1 and 2.3, any two graphs from different groups are determined by their hook immanantal polynomials, since they differ either in the number of edges or in the sum of squares of degrees.Furthermore, combining Lemmas 2.7, 2.8, Corollary 2.1 and Tables 1, 2, 3, 4, 5, we compute the 3rd and 4th coefficients of the hook immanantal polynomial for each graph in every group under different hook partitions \(\lambda=(k,1^{n-k})\). We then judge whether these graphs are determined by the k-hook immanantal polynomial via coefficient comparison. Zhang and Wu \cite{H1} presented the numbers of triangles (Table 4) and quadrilaterals (Table 5) for graphs in \(\mathscr{G}_n\), which greatly facilitates the computation of the aforementioned coefficients. Consequently, proving the following lemma is sufficient to establish Theorem 1.1.
\begin{table}[htbp]
\centering
\caption{The number of \(c_3(G)\) in \(\mathscr{G}_n\)}
  \begin{tabular}{|c|l|c|l|}
    \hline
   Graph  & $\mathscr{C}_3(G)$ & Graph & $\mathscr{C}_3(G)$ \\
    \hline
    $G_{10}$ & $\binom{n}{3} - n + 2$ & $G_{521}$ & $\binom{n}{3} - 5n + 10$ \\
    \hline
    $G_{20}$ & $\binom{n}{3} - 2n + 5$ & $G_{32}, G_{33}$ & $\binom{n}{3} - 3n + 8$ \\
    \hline
    $G_{21}$ & $\binom{n}{3} - 2n + 4$ & $G_{41}, G_{47}$ & $\binom{n}{3} - 4n + 11$ \\
    \hline
    $G_{30}$ & $\binom{n}{3} - 3n + 9$ & $G_{59}, G_{524}$ & $\binom{n}{3} - 5n + 17$ \\
    \hline
    $G_{31}$ & $\binom{n}{3} - 3n + 7$ & $G_{42}, G_{45}, G_{48}$ & $\binom{n}{3} - 4n + 10$ \\
    \hline
    $G_{34}$ & $\binom{n}{3} - 3n + 6$ & $G_{44}, G_{46}, G_{410}$ & $\binom{n}{3} - 4n + 12$ \\
    \hline
    $G_{40}$ & $\binom{n}{3} - 4n + 14$ & $G_{50}, G_{51}, G_{514}$ & $\binom{n}{3} - 5n + 12$ \\
    \hline
    $G_{43}$ & $\binom{n}{3} - 4n + 9$ & $G_{55}, G_{512}, G_{520}, G_{523}$ & $\binom{n}{3} - 5n + 15$ \\
    \hline
    $G_{49}$ & $\binom{n}{3} - 4n + 8$ & $G_{58}, G_{511}, G_{513}, G_{517}$ & $\binom{n}{3} - 5n + 13$ \\
    \hline
    $G_{52}$ & $\binom{n}{3} - 5n + 11$ & $G_{53}, G_{57}, G_{518}, G_{519}, G_{522}$ & $\binom{n}{3} - 5n + 14$ \\
    \hline
    $G_{54}$ & $\binom{n}{3} - 5n + 20$ & $G_{56}, G_{510}, G_{515}, G_{516}, G_{525}$ & $\binom{n}{3} - 5n + 16$ \\
    \hline
  \end{tabular}
\end{table}
\begin{lemma}
For any partition \(\lambda=(k,1^{n-k})\) with \(k>2\) and \(n\neq 2k-1\), the complete graph \(K_n\) and the graphs \(G_{10},G_{20},G_{21},G_{31},G_{33},G_{34},G_{40},G_{43},G_{46},G_{49},G_{52},G_{54},G_{521}\) are each uniquely determined by their $k$-th hook immanantal polynomial.
\end{lemma}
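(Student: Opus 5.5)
By Lemma~2.3 and Lemma~2.4 (global reduction), any graph \(H\) with \(\Phi_k(L(H),x)=\Phi_k(L(G),x)\) for \(G\in\mathscr{G}_n\) lies in \(\mathscr{G}_n\). It also has the same number of deleted edges \(t\) and the same degree-square sum \(\sum d_i^2\), since Lemma~2.1 and Lemma~2.4 recover \(n\), \(m\) and \(\sum d_i^2\) from \(c_{(k,1^{n-k}),0}\), \(c_{(k,1^{n-k}),1}\) and \(c_{(k,1^{n-k}),2}\). So the plan is to show that each listed graph is alone in its class. A class here consists of graphs with the same \(t\) and the same \(\sum d_i^2\). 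No coefficient beyond the second will then be needed.

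First, \(K_n\) is the only graph with \(t=0\). \(G_{10}\) is the only graph with \(t=1\), since deleting one edge from \(K_n\) gives a unique graph up to isomorphism. For \(t=2\) the only candidates are \(G_{20}\) and \(G_{21}\). By Table~1 their degree-square sums are \(n^3-2n^2-7n+14\) and \(n^3-2n^2-7n+12\), which differ.

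For \(t=3,4,5\) I would read Table~1 row by row. The following graphs each occupy a row of Table~1 alone:
\[
G_{31},\ G_{33},\ G_{34}\ (t=3),\qquad G_{40},\ G_{43},\ G_{46},\ G_{49}\ (t=4),\qquad G_{52},\ G_{54},\ G_{521}\ (t=5).
\]
The other \(t=3\) row is \(G_{30},G_{32}\), with constant \(24\), and it contains none of these. The relevant constants are:
\begin{itemize}
\item for \(t=3\): \(20\), \(22\) and \(18\), for \(G_{31}\), \(G_{33}\), \(G_{34}\) respectively;
\item for \(t=4\): \(36\), \(26\), \(34\) and \(24\), for \(G_{40}\), \(G_{43}\), \(G_{46}\), \(G_{49}\) respectively;
\item for \(t=5\): \(32\), \(50\) and \(30\), for \(G_{52}\), \(G_{54}\), \(G_{521}\) respectively.
\end{itemize}
Each of these is distinct from every other constant appearing with the same linear term \(-7n\), \(-11n\), \(-15n\) or \(-19n\). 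Hence no other graph in \(\mathscr{G}_n\) matches such a graph in both \(m\) and \(\sum d_i^2\).

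The main obstacle is purely bookkeeping. I must make sure Table~1 covers every graph of Figure~1 with the correct value, so that no unlisted graph secretly shares a value. I would cross-check by computing \(\sum d_i^2\) directly. If \(F\) is the deleted graph with degrees \(f_i\), then
\[
\sum_i d_i^2=\sum_i (n-1-f_i)^2=n(n-1)^2-4t(n-1)+\sum_i f_i^2 .
\]
So the classes are exactly the classes of \(\sum_i f_i^2\) among graphs \(F\) with \(t\) edges and no isolated vertices. I would verify, for each of \(t=3,4,5\), that the listed complements have values of \(\sum_i f_i^2\) attained by no other \(t\)-edge graph. For example, the star \(K_{1,5}\) attains the maximum value \(30\), and \(5K_2\) attains the minimum value \(10\).

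Note that \(k>2\) and \(n\ne 2k-1\) are not actually used here, since only the first three coefficients are compared. This holds provided \(c_{(k,1^{n-k}),0}=\binom{n-1}{k-1}\ne0\), which is always true for \(1\le k\le n\).
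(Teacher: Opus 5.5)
Your proposal is correct and follows the paper's own argument: the global reduction (Lemma~2.3) puts any partner in $\mathscr{G}_n$ with the same $t$, Lemma~2.4 recovers $\sum_i d_i^2$ from the first three coefficients, and Table~1 shows that each listed graph is alone in its $(t,\sum_i d_i^2)$ class. Your cross-check via $\sum_i d_i^2=n(n-1)^2-4t(n-1)+\sum_i f_i^2$ does confirm Table~1: the $5$, $11$ and $26$ complements with $t=3,4,5$ edges give exactly the singleton classes you name. You are also right that the hypotheses $k>2$ and $n\neq 2k-1$ are never used here.
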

\begin{proof}
For the graphs listed above, any two distinct graphs differ either in the number of vertices, the number of edges, or the sum of squares of degrees. It is therefore straightforward to verify that all these graphs are uniquely determined by their $k$-th hook immanantal polynomial.
\end{proof}
\begin{lemma}
Let G be a simple connected graph with n vertices (\(n>7\)) and m edges. For any partition \(\lambda=(k,1^{n-k})\), if \(n\ge k>2\) and \(n\neq 2k-1\), then the graphs \(G_{30}\) and \(G_{32}\) are determined by their $k$-th hook immanantal polynomial.
\end{lemma}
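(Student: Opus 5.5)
Since $G_{30}$ and $G_{32}$ are the only graphs in $\mathscr{G}_n$ with three deleted edges and $\sum d_i^2 = n^3-2n^2-11n+24$ (Table 1), the proof reduces to separating this single pair. By Lemma~\ref{lem:global-reduction}, any $H$ with $\Phi_k(L(H),x)=\Phi_k(L(G_{30}),x)$ lies in $\mathscr{G}_n$. By Lemma~2.3, $H$ has the same $n$, $m=\binom n2-3$ and $\sum d_i^2$. Table~1 then forces $H\in\{G_{30},G_{32}\}$, and likewise for $G_{32}$. It therefore suffices to show $c_{(k,1^{n-k}),3}(L(G_{30}))\neq c_{(k,1^{n-k}),3}(L(G_{32}))$ for $n>7$, $n\ge k>2$, $n\ne 2k-1$.

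Using Lemma~2.1 together with Lemma~2.2, the terms involving $m$ and $\sum d_i^2$ cancel in $F_3$ and in $\mathcal{M}_3^1$. Only the cube sums and the triangle counts differ. From Table~2,
\[
\sum d_i^3(G_{30})-\sum d_i^3(G_{32})=-6,
\]
so $F_3(G_{30})-F_3(G_{32})=-2$. From Table~4, $|\mathscr{C}_3(G_{30})|-|\mathscr{C}_3(G_{32})|=1$. Hence
\[
c_3(G_{30})-c_3(G_{32})=-2\binom{n-1}{k-1}-2\left[\binom{n-4}{k-4}-\binom{n-4}{k-1}\right]
=-2\left[\binom{n-1}{k-1}-\binom{n-4}{k-1}+\binom{n-4}{k-4}\right].
\]

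It remains to show the bracket is strictly positive. Lemma~2.10 gives $\binom{n-1}{k-1}\ge\binom{n-4}{k-1}$, and $\binom{n-4}{k-4}\ge 0$, so the bracket is nonnegative. For strict positivity I split into two cases:
\begin{itemize}
\item If $k\ge4$, then $\binom{n-4}{k-4}\ge1$ since $n\ge k$, so the bracket is positive.
\item If $k=3$, then $\binom{n-1}{2}-\binom{n-4}{2}=3n-9>0$ for $n>7$.
\end{itemize}
Alternatively, strictness follows from the ratio computed in Lemma~2.10's proof, which exceeds $1$ whenever $k\ge2$.

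So the difference is never zero, and the restriction $n\ne 2k-1$ is not even needed here. The main obstacle I expect is verifying the table entries: the cube-sum difference $-6$ and the triangle-count difference $1$ for the specific complements $F_{30}$ and $F_{32}$, which I read as a triangle and a path/star. I would recheck these directly by counting degree deficits, namely how many vertices have degree $n-1-j$ and how many deleted triangles there are. If the signs turned out to combine opposite ways, Corollary~2.1-type sign arguments would be needed, and the case $n=2k-1$ would reappear.
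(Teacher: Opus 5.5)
Your proposal is correct and is essentially the paper's proof. You reduce to the pair via $m$ and $\sum d_i^2$, obtain $c_{k,3}(G_{30})-c_{k,3}(G_{32})=-2\binom{n-1}{k-1}-2\bigl[\binom{n-4}{k-4}-\binom{n-4}{k-1}\bigr]$ from the cube-sum and triangle-count tables (these check out: $F_{30}=K_{1,3}$ and $F_{32}=K_3$, not a path), and show the difference is nonzero by separating $k=3$ from $k\ge 4$ using $\binom{n-1}{k-1}\ge\binom{n-4}{k-1}$.

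One caveat, inherited from Lemma~2.1: $\chi_{(k,1^{n-k})}$ at a $3$-cycle is $\binom{n-4}{k-4}+\binom{n-4}{k-1}$ (e.g.\ for $k=1$ the determinant contributes $-2|\mathscr{C}_3(G)|$ to $c_3$, not $+2|\mathscr{C}_3(G)|$). So the true difference is $-2\bigl[\binom{n-1}{k-1}+\binom{n-4}{k-4}+\binom{n-4}{k-1}\bigr]<0$, which only makes your nonvanishing argument easier.
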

\begin{proof}
From Table 4, Lemma 2.1 and Lemma 2.10, for graphs \(G_{30}\) and \(G_{32}\), when \(k=3\), we have
~$c_{k,3}(G_{30})-c_{k,3}(G_{32})=(F_3(G_{30})-F_3(G_{32}))\binom{n-1}{k-1} + \frac{(2k-n-1)(\mathcal{M}_3^{1}(G_{30})-\mathcal{M}_3^{1}(G_{32}))}{k-1}\binom{n-2}{k-2}-2(|\mathscr{C}_3(G_{30})|
    -|\mathscr{C}_3(G_{32})|)
    \binom{n-4}{k-1}=-2\binom{n-1}{2}+2\binom{n-4}{2}\neq0,$~
When \(k \ge 4\),~$c_{k,3}(G_{30})-c_{k,3}(G_{32})=(F_3(G_{30})-F_3(G_{32}))\binom{n-1}{k-1} + \frac{(2k-n-1)(\mathcal{M}_3^{1}(G_{30})-\mathcal{M}_3^{1}(G_{32}))}{k-1}\binom{n-2}{k-2}-2(|\mathscr{C}_3(G_{30})|
    -|\mathscr{C}_3(G_{32})|)[\binom{n-4}{k-4} - \binom{n-4}{k-1}]=-2\binom{n-1}{k-1}-2[\binom{n-4}{k-4}-\binom{n-4}{k-1}]\leq-2\binom{n-4}{k-4}<0,$~
Therefore, the graphs \(G_{30}\) and \(G_{32}\) are determined by their $k$-th hook immanantal polynomial.
\end{proof}
\begin{table}[htbp]
\centering
\caption{The number of \(c_4(G)\) in \(\mathscr{G}_n\)}
\label{tab:symbol}
  \begin{tabular}{|c|c|c|c|}
    \hline
Graph & $\mathscr{C}_4(G)$ & Graph & $\mathscr{C}_4(G)$ \\
    \hline
    $G_{32}$ & $3\binom{n}{4} - 3n^2 + 18n - 27$ & $G_{510}$ & $3\binom{n}{4} - 5n^2 + 32n - 50$ \\
    \hline
    $G_{33}$ & $3\binom{n}{4} - 3n^2 + 17n - 23$ & $G_{511}$ & $3\binom{n}{4} - 5n^2 + 28n - 26$ \\
    \hline
    $G_{42}$ & $3\binom{n}{4} - 4n^2 + 22n - 22$ & $G_{515}$ & $3\binom{n}{4} - 5n^2 + 31n - 45$ \\
    \hline
    $G_{44}$ & $3\binom{n}{4} - 4n^2 + 24n - 35$ & $G_{516}$ & $3\binom{n}{4} - 5n^2 + 33n - 55$ \\
    \hline
    $G_{46}$ & $3\binom{n}{4} - 4n^2 + 25n - 39$ & $G_{517}$ & $3\binom{n}{4} - 5n^2 + 28n - 27$ \\
    \hline
    $G_{47}$ & $3\binom{n}{4} - 4n^2 + 23n - 29$ & $G_{518}$ & $3\binom{n}{4} - 5n^2 + 30n - 37$ \\
    \hline
    $G_{48}$ & $3\binom{n}{4} - 4n^2 + 22n - 23$ & $G_{520}$ & $3\binom{n}{4} - 5n^2 + 30n - 40$ \\
    \hline
    $G_{410}$ & $3\binom{n}{4} - 4n^2 + 24n - 34$ & $G_{523}$ & $3\binom{n}{4} - 5n^2 + 30n - 39$ \\
    \hline
    $G_{50}$ & $3\binom{n}{4} - 5n^2 + 27n - 21$ & $G_{524}$ & $3\binom{n}{4} - 5n^2 + 32n - 48$ \\
    \hline
    $G_{51}$ & $3\binom{n}{4} - 5n^2 + 27n - 20$ & $G_{41}, G_{45}$ & $3\binom{n}{4} - 4n^2 + 23n - 27$ \\
    \hline
    $G_{53}$ & $3\binom{n}{4} - 5n^2 + 29n - 32$ & $G_{57}, G_{513}$ & $3\binom{n}{4} - 5n^2 + 29n - 30$ \\
    \hline
    $G_{55}$ & $3\binom{n}{4} - 5n^2 + 30n - 38$ & $G_{58}, G_{514}$ & $3\binom{n}{4} - 5n^2 + 28n - 25$ \\
    \hline
    $G_{56}$ & $3\binom{n}{4} - 5n^2 + 31n - 40$ & $G_{512}, G_{525}$ & $3\binom{n}{4} - 5n^2 + 31n - 44$ \\
    \hline
    $G_{59}$ & $3\binom{n}{4} - 5n^2 + 33n - 54$ & $G_{519}, G_{522}$ & $3\binom{n}{4} - 5n^2 + 29n - 33$ \\
    \hline
\end{tabular}
\end{table}
\begin{lemma}
Let G be a simple connected graph with \(n>7\) vertices and m edges. For any partition \(\lambda=(k,1^{n-k})\), if \(k>2\) and \(n\neq 2k-1\), each of the following statements holds:
\begin{enumerate}
 \item[(i)]The graphs \(G_{42}\) and \(G_{48}\) are determined by their $k$-th hook immanantal polynomial.
 \item[(ii)]The graphs \(G_{44}\) and \(G_{410}\) are determined by their $k$-th hook immanantal polynomial.
 \item[(iii)]The graphs \(G_{41}\), \(G_{45}\) and \(G_{47}\) are determined by their $k$-th hook immanantal polynomial.
\end{enumerate}
\end{lemma}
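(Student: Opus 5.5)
By Lemma 2.3 and Table 1, the only possible $k$-th hook immanantal partners within each of the three groups are graphs with the same $n$, $m$ and $\sum d_i^2$. Lemma 2.2 (Lemma \ref{lem:global-reduction}) guarantees that any partner $H$ of a graph in $\mathscr{G}_n$ lies in $\mathscr{G}_n$. So it suffices to show that inside each group $\{G_{42},G_{48}\}$, $\{G_{44},G_{410}\}$, $\{G_{41},G_{45},G_{47}\}$ the third or fourth coefficients of Lemma 2.1 differ for every $k>2$ with $n\neq 2k-1$. Since $m$ and $\sum d_i^2$ agree within a group, Lemma 2.2 shows that $\mathcal{M}_3^1$ agrees. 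Also $F_3$ differs only through $\tfrac13\sum d_i^3$, read off from Table 2.

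The first step is to compare the third coefficients. Writing $\Delta X = X(G)-X(G')$,
\[
\Delta c_{k,3}=\tfrac13\Delta\Big(\sum d_i^3\Big)\binom{n-1}{k-1}-2\,\Delta|\mathscr{C}_3|\Big[\binom{n-4}{k-4}-\binom{n-4}{k-1}\Big].
\]
For $\{G_{44},G_{410}\}$, Table 2 gives $\Delta\sum d_i^3=6$ and Table 4 gives $\Delta|\mathscr{C}_3|=0$, so $\Delta c_{k,3}=2\binom{n-1}{k-1}\neq0$. For $\{G_{41},G_{45},G_{47}\}$, the pair $G_{41},G_{45}$ has $\Delta\sum d_i^3=-6$ and $\Delta|\mathscr{C}_3|=1$, so $\Delta c_{k,3}=-2\binom{n-1}{k-1}-2[\binom{n-4}{k-4}-\binom{n-4}{k-1}]$. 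Exactly as in Lemma 3.2, Lemma 2.9 bounds this above by $-2\binom{n-4}{k-4}<0$ for $k\ge4$, and the case $k=3$ is checked directly. The pair $G_{41},G_{47}$ has $\Delta\sum d_i^3=-6$ and $\Delta|\mathscr{C}_3|=0$, so $\Delta c_{k,3}\neq0$ again. That leaves $G_{45}$ versus $G_{47}$ and $G_{42}$ versus $G_{48}$, which have identical $\sum d_i^3$ and triangle counts; there we pass to the fourth coefficient.

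The second step is to compute $\Delta c_{k,4}$ for these remaining pairs from the formula in Lemma 2.1. Within a pair, $F_4$ differs only through $\sum d_i^4$. The term $\binom m2-\sum\binom{d_i}{2}$ agrees. $\mathcal{M}_4^1$ differs through $\sum_{E}d_id_j$ (Table 3). $\mathcal{C}_4^3$ differs through $\sum_j\mathscr{T}_j$ (Table 3). $|\mathscr{C}_4|$ differs by Table 5.

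For $G_{42},G_{48}$, the tables give $\Delta\sum_E d_id_j=1$, $\Delta\sum\mathscr{T}_j=2$ and $\Delta|\mathscr{C}_4|=1$. I would compute $\sum d_i^4$ directly from the deleted-edge pictures; I expect it to coincide, since the two complements share the same degree multiset. Then
\[
\Delta c_{k,4}=\Big[\binom{n-2}{k-2}-\binom{n-2}{k-1}\Big]+4\Big[\binom{n-4}{k-4}-\binom{n-4}{k-1}\Big]-2\Big[\binom{n-5}{k-5}-\binom{n-5}{k-1}\Big]\cdot(-1),
\]
with signs to be fixed carefully. Using $\binom{n-2}{k-2}-\binom{n-2}{k-1}=\frac{2k-n-1}{k-1}\binom{n-2}{k-2}$, every term has the sign of $2k-n-1$, by Lemma 2.8 and Corollary 2.1 for $k\ge5$, so $\Delta c_{k,4}\ne0$. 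The cases $k=3,4$ are handled by explicit evaluation, using $n>7$.

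For $G_{45}$ versus $G_{47}$, Table 3 lacks these graphs, so I would compute $\sum_E d_id_j$ and the triangle degree sums from their complements and proceed identically.

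The main obstacle is the sign bookkeeping. Corollary 2.1 applies only if all three bracketed differences enter with coefficients of the same sign. If the computed coefficients have mixed signs, I would instead bound the dominant term, namely $\binom{n-2}{k-2}$ against $\binom{n-4}{k-4}$ and $\binom{n-5}{k-5}$ via the ratio estimates of Lemmas 2.9 and 2.10, to rule out cancellation for $n>7$.
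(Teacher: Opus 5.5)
Your handling of (i), (ii) and of the pairs $(G_{41},G_{45})$, $(G_{41},G_{47})$ matches the paper's. The pair $G_{45}$ versus $G_{47}$, however, has a gap. You claim these two graphs have identical triangle counts and defer them to the fourth coefficient. That misreads Table 4, which lists $G_{47}$ (together with $G_{41}$) at $\binom{n}{3}-4n+11$ and $G_{45}$ at $\binom{n}{3}-4n+10$. Your own values already show this: $\Delta|\mathscr{C}_3|=1$ for $(G_{41},G_{45})$ and $0$ for $(G_{41},G_{47})$ force $|\mathscr{C}_3(G_{45})|-|\mathscr{C}_3(G_{47})|=-1$. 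Concretely, $G_{45}=K_n-E(K_3\cup K_2)$ and $G_{47}=K_n-E(P_5)$ have the same degree sequence, but deleting a triangle destroys one more triangle of $K_n$. Since $m$, $\sum d_i^2$ and $\sum d_i^3$ agree, the third coefficient already separates them:
\[
c_{k,3}(G_{45})-c_{k,3}(G_{47})=2\Big[\binom{n-4}{k-4}-\binom{n-4}{k-1}\Big].
\]
This equals $-2\binom{n-4}{2}$ for $k=3$ and $2\bigl(1-\binom{n-4}{3}\bigr)$ for $k=4$, both nonzero for $n>7$. For $k\ge5$ it is nonzero by the sign lemma preceding Corollary 2.1, since $n\neq2k-1$. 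This is how the paper finishes (iii).

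The fourth-coefficient argument you sketch for this pair is not a valid substitute as written. It is not ``identical'' to the $G_{42}/G_{48}$ computation, because $\Delta\mathcal{C}_4^3$ now contains $2m\,\Delta|\mathscr{C}_3|=-2m$. Computing from the complements gives $\Delta\mathcal{M}_4^1=-1$, $\Delta\mathcal{C}_4^3=-n^2+4n+1$ and $\Delta|\mathscr{C}_4|=2$. The three bracketed binomial differences therefore enter with coefficients $-1$, $2n^2-8n-2$ and $4$. These signs are mixed, so Corollary 2.1 does not apply, and you would need a dominance estimate that you have not supplied. Replace that step with the third-coefficient comparison above, and the rest of your proof goes through.
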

\begin{proof}
\begin{enumerate}
\item[(i)]From Tables 4, 5, Lemma 2.1 and Corollary 2.1, for graphs \(G_{42}\) and \(G_{48}\) with \(n>7\):
When \(k=3\),
$c_{k,4}(G_{42})-c_{k,4}(G_{48})=
    [F_4(G_{42})-F_4(G_{48})]\binom{n-1}{2} + \frac{(5-n)(\mathcal{M}_4^{1}(G_{42})-\mathcal{M}_4^{1}(G_{48}))}{2}\binom{n-2}{1}
    -2(\mathcal{C}_{4}^{3}(G_{42})-\mathcal{C}_{4}^{3}(G_{48}))[ - \binom{n-4}{2}]+2(|\mathscr{C}_4(G_{42})|-|\mathscr{C}_4(G_{48})|)[
    -\binom{n-5}{2}]+[\binom{m}{2}-\sum_{i=1}\limits^n(\binom{d_{42}(v_i)}{2}-
    \binom{d_{48}(v_i)}{2})][-2\binom{n-5}{k-1}]
    =\frac{(5-n)}{2}\binom{n-2}{k-2}-4\binom{n-4}{2}-2\binom{n-5}{2}<0,$
When \(k=4\),
$c_{k,4}(G_{42})-c_{k,4}(G_{48})=\frac{(7-n)}{3}\binom{n-2}{2}+4[1-\binom{n-4}{3}]-2\binom{n-5}{k-1}=\frac{-(n-7)(7n^2-59n+138)}{6},$~
This equation has no positive integer solutions for \(n>7\).
When \(k \ge 5\),
$c_{k,4}(G_{42})-c_{k,4}(G_{48})=
    [F_4(G_{42})-F_4(G_{48})]\binom{n-1}{k-1} + \frac{(2k-n-1)(\mathcal{M}_4^{1}(G_{42})-\mathcal{M}_4^{1}(G_{48}))}{k-1}\binom{n-2}{k-2}
    -2(\mathcal{C}_{4}^{3}(G_{42})-\mathcal{C}_{4}^{3}(G_{48}))[\binom{n-4}{k-4} - \binom{n-4}{k-1}]+2(|\mathscr{C}_4(G_{42})|-|\mathscr{C}_4(G_{48})|)[\binom{n-5}{k-5}
    -\binom{n-5}{k-1}]+[\binom{m}{2}-\sum_{i=1}\limits^n(\binom{d_{42}(v_i)}{2}-
    \binom{d_{48}(v_i)}{2})][\binom{n-5}{k-5}-2\binom{n-5}{k-3}+\binom{n-5}{k-1}]
    =\frac{(2k-n-1)}{k-1}\binom{n-2}{k-2}+4[\binom{n-4}{k-4} - \binom{n-4}{k-1}]+2[\binom{n-5}{k-5}-\binom{n-5}{k-1}]\neq0,$
Therefore, the graphs \(G_{42}\) and \(G_{48}\) are determined by their $k$-th hook immanantal polynomial.
\item[(ii)]
From Table 4 and Lemma 2.1, for graphs \(G_{44}\) and \(G_{410}\),
$c_{k,3}(G_{44})-c_{k,3}(G_{410})=(F_3(G_{44})-F_3(G_{410}))\binom{n-1}{k-1} + \frac{(2k-n-1)(\mathcal{M}_3^{1}(G_{44})-\mathcal{M}_3^{1}(G_{410}))}{k-1}\binom{n-2}{k-2}-2(|\mathscr{C}_3(G_{44})|
    -|\mathscr{C}_3(G_{410})|)(\binom{n-4}{k-4} - \binom{n-4}{k-1})=2\binom{n-1}{k-1}\neq0$,
Therefore, the graphs \(G_{44}\) and \(G_{410}\) are determined by their $k$-th hook immanantal polynomial.
\item[(iii)]From Table 4, Lemma 2.1 and Lemma 2.10, for graphs \(G_{41}\) and \(G_{45}\):
When \(k=3\),
 $c_{k,3}(G_{41})-c_{k,3}(G_{45})=-2\binom{n-1}{2}+2\binom{n-4}{2}\neq0,$~
 When \(k \ge 4\),
 $c_{k,3}(G_{41})-c_{k,3}(G_{45})=-2\binom{n-1}{k-1}-2[\binom{n-4}{k-4}-\binom{n-4}{k-1}]\leq-2\binom{n-4}{k-4}<0,$~
 Thus, the graphs \(G_{41}\) and \(G_{45}\) are determined by their $k$-th hook immanantal polynomial.
 Meanwhile, for graphs \(G_{41}\) and \(G_{47}\),
 ~$c_{k,3}(G_{41})-c_{k,3}(G_{47})=-2\binom{n-1}{k-1}\neq0,$~
\(G_{41}\) and \(G_{47}\) are determined by their $k$-th hook immanantal polynomial.
Similarly, for graphs \(G_{45}\) and \(G_{47}\),
~$c_{k,3}(G_{45})-c_{k,3}(G_{47})=2[\binom{n-4}{k-4}-\binom{n-4}{k-1}],$~
Since \(n \neq 2k-1\), we have~$2[\binom{n-4}{k-4}-\binom{n-4}{k-1}]\neq0,$~
Therefore, the graphs \(G_{45}\) and \(G_{47}\) are determined by their $k$-th hook immanantal polynomial.
\end{enumerate}
\end{proof}
\begin{lemma}
Let G be a simple connected graph with \(n>7\) vertices and m edges. For any partition \(\lambda=(k,1^{n-k})\), if \(k>2\) and \(n\neq 2k-1\), each of the following statements holds:
\begin{enumerate}
\item[(i)] The graphs \(G_{56}, G_{512}, G_{515}\) and \(G_{525}\) are determined by their $k$-th hook immanantal polynomial.
\item[(ii)] The graphs \(G_{58}, G_{511}, G_{514}\) and \(G_{517}\) are determined by their $k$-th hook immanantal polynomial.
\item[(iii)] The graphs \(G_{510}\) and \(G_{524}\) are determined by their $k$-th hook immanantal polynomial.
\item[(iv)]The graphs \(G_{53}, G_{57}, G_{513}, G_{519}\) and \(G_{522}\) are determined by their $k$-th hook immanantal polynomial.
\item[(v)] The graphs \(G_{55}, G_{518}, G_{520}\) and \(G_{523}\) are determined by their $k$-th hook immanantal polynomial.
\item[(vi)] The graphs \(G_{59}\) and \(G_{516}\) are determined by their $k$-th hook immanantal polynomial.
\item[(vii)] The graphs \(G_{50}\) and \(G_{51}\) are determined by their $k$-th hook immanantal polynomial.
\end{enumerate}
\end{lemma}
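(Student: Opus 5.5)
The plan is to follow the scheme of the two preceding lemmas. Graphs within one group of Table 1 have the same $n$, $m$ and $\sum d_i^2$, so by Lemma 2.3 any partner of one of them lies in the same group. It therefore suffices to separate every pair inside each of the seven groups (i)--(vii) by a later coefficient.

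\textbf{Third coefficient.} For each pair I first compute $c_{k,3}(G)-c_{k,3}(H)$ from Lemma 2.1 and Lemma 2.2. Since $m$ and $\sum d_i^2$ agree, $\mathcal{M}_3^1$ agrees. The difference then reduces to
\[
\tfrac13\Big(\sum d_i^3(G)-\sum d_i^3(H)\Big)\binom{n-1}{k-1}-2\big(|\mathscr{C}_3(G)|-|\mathscr{C}_3(H)|\big)\Big[\binom{n-4}{k-4}-\binom{n-4}{k-1}\Big].
\]
Both inputs are read off Tables 2 and 4.
\begin{itemize}
\item If the cube sums differ but the triangle counts agree, the difference is a nonzero multiple of $\binom{n-1}{k-1}$. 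Examples are $G_{512}$ vs $G_{56}$, and $G_{59}$ vs $G_{516}$ (cube constants $-190$ vs $-178$, triangle counts $17$ vs $16$).
\item If the cube sums agree and the triangle counts differ, Lemma 2.9 gives nonvanishing exactly when $n\ne 2k-1$. Examples are $G_{512}$ vs $G_{515}$, and $G_{53}$/$G_{57}$ vs $G_{513}$.
\item If both differ, I expect the cube difference $\Delta_3$ (a linear polynomial in $n$, divided by $3$) to dominate. For $k\ge4$ I use Lemma 2.10 to bound $|\binom{n-4}{k-4}-\binom{n-4}{k-1}|\le\binom{n-1}{k-1}$. When the signs are opposite the two terms reinforce, as in the $G_{30}/G_{32}$ computation. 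The case $k=3$ is handled separately, using $\binom{n-4}{k-4}=0$ there.
\end{itemize}

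\textbf{Fourth coefficient.} Pairs agreeing in both $\sum d_i^3$ and $|\mathscr{C}_3|$ are passed to $c_{k,4}$. Reading Tables 2 and 4, these are $G_{511}/G_{517}$, $G_{58}/G_{514}$ (to be checked), $G_{519}/G_{522}$, $G_{53}/G_{57}$, $G_{55}/G_{523}$, $G_{518}$ vs $G_{55}$ (to be checked), and $G_{50}/G_{51}$.

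For such pairs, $F_4$ differs only through $\sum d_i^4$. I would argue that equal degree power sums up to order $3$, with such small deviations from $n-1$, force equal degree sequences, so that $\sum d_i^4$ and $\sum\binom{d_i}{2}$ also agree. Then the difference becomes
\[
\Big(\sum_{E} d_id_j\Big)\text{-difference}\cdot\Big[\binom{n-2}{k-2}-\binom{n-2}{k-1}\Big]
-2\,\Delta\mathcal{C}_4^3\Big[\binom{n-4}{k-4}-\binom{n-4}{k-1}\Big]
+2\,\Delta|\mathscr{C}_4|\Big[\binom{n-5}{k-5}-\binom{n-5}{k-1}\Big].
\]
Here $\Delta\mathcal{C}_4^3=-\Delta\sum_j\mathscr{T}_j$, and the inputs come from Tables 3 and 5. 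I also rewrite $\binom{n-2}{k-2}-\binom{n-2}{k-1}=\frac{2k-n-1}{k-1}\binom{n-2}{k-2}$.

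Checking the tables, the three differences have coherent signs in each case. For example, $G_{50}$ vs $G_{51}$ gives $-1$, $+2$ and $-1$ respectively, so the total is a negative combination as in Corollary 2.1. Then:
\begin{itemize}
\item For $k\ge5$, Corollary 2.1 (after flipping the overall sign if needed) gives nonvanishing directly.
\item For $k=3,4$ the expression is an explicit polynomial in $n$. I check by hand that it has no integer root $n>7$, as in Lemma 3.3(i).
\end{itemize}

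\textbf{Main obstacle.} The hardest part is the bookkeeping in the fourth-coefficient pairs, together with mixed-sign cases at the third coefficient. If some pair yields coefficients of mixed signs, Corollary 2.1 no longer applies. For such a pair I would first try to exploit $n\ne2k-1$ separately in the regimes $n\ge2k$ and $n\le 2k-2$. In each regime I would use Lemma 2.10 and unimodality (Lemma 2.7) to show that the term with the largest binomial dominates.
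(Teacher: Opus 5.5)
\textbf{Gap at the pair \(G_{518},G_{520}\).} Your overall scheme is the paper's: separate each pair by \(c_{k,3}\), otherwise by \(c_{k,4}\), with Corollary 2.1 for \(k\ge5\) and explicit polynomials for \(k=3,4\). But the third-coefficient step has a genuine gap at the pair \(G_{518},G_{520}\) in group (v), which your case analysis never isolates. From Tables 2 and 4, \(\tfrac13\Delta\sum d_i^3=-2\) and \(\Delta|\mathscr{C}_3|=-1\), so
\[
c_{k,3}(G_{518})-c_{k,3}(G_{520})=-2\binom{n-1}{k-1}+2\Big[\binom{n-4}{k-4}-\binom{n-4}{k-1}\Big].
\]
The two coefficients here have equal size. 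So your bound \(\bigl|\binom{n-4}{k-4}-\binom{n-4}{k-1}\bigr|\le\binom{n-1}{k-1}\) gives only a non-strict inequality when the bracket is positive.

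Worse, at \(k=n\) the expression equals \(-2+2(1-0)=0\), and \(n\neq 2k-1\) does not exclude \(k=n\). So the third coefficients genuinely coincide. No domination argument via unimodality in the regimes \(n\ge 2k\), \(n\le 2k-2\) can repair an exact cancellation. Since you only pass to \(c_{k,4}\) the pairs agreeing in both \(\sum d_i^3\) and \(|\mathscr{C}_3|\), this case falls through your plan.

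\textbf{What is needed.} For \(k<n\) the difference is nonzero, but only via the finer split the paper uses:
\begin{itemize}
\item at \(k=3\) it is \(-2\binom{n-1}{2}-2\binom{n-4}{2}\);
\item for \(4\le k\le n-3\) it is at most \(-2\binom{n-4}{k-1}<0\) by Lemma 2.10;
\item at \(k=n-2\) it is \(-2\binom{n-1}{2}+2\binom{n-4}{2}\);
\item at \(k=n-1\) it is \(-6\).
\end{itemize}
At \(k=n\) you must compute \(c_{n,4}=F_4+\mathcal{M}_4^{1}-2\mathcal{C}_4^{3}+2|\mathscr{C}_4|+\bigl[\binom{m}{2}-\sum_i\binom{d_i}{2}\bigr]\) in full. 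Your reduced fourth-coefficient formula does not apply here, for two reasons. First, the degree sequences differ: the deleted graphs have degree sequences \((3,2,2,1,1,1)\) and \((2,2,2,2,2)\), i.e.\ a paw plus a disjoint edge, and \(C_5\). Second, the triangle counts differ, so the \(2m\Delta|\mathscr{C}_3|\) part of \(\Delta\mathcal{C}_4^{3}\) survives.

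Carrying it out, one gets
\begin{itemize}
\item \(\Delta F_4=-2n^2+8n+5\),
\item \(\Delta\mathcal{M}_4^{1}=-6\),
\item \(-2\Delta\mathcal{C}_4^{3}=2n^2-8n-6\),
\item \(2\Delta|\mathscr{C}_4|=6\),
\item no change in the matching term.
\end{itemize}
Hence \(c_{n,4}(G_{518})-c_{n,4}(G_{520})=-1\), which is exactly the certificate the paper records.

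\textbf{Misread examples.} Several of your illustrative classifications do not match the tables:
\begin{itemize}
\item \(G_{56}/G_{512}\) and \(G_{59}/G_{516}\) also differ in triangle counts;
\item \(G_{53},G_{57}\) versus \(G_{513}\) also differ in cube sums;
\item within a group the cube difference is a constant, not linear in \(n\).
\end{itemize}
Those pairs are still separated by \(c_{k,3}\). But only a genuine pair-by-pair check exposes the \(G_{518}/G_{520}\) cancellation.
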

\begin{proof}
\begin{enumerate}
\item[(i)]
From Table 4, Lemma 2.1 and Lemma 2.10, for graphs \(G_{56}\) and \(G_{512}\):When \(k=3\),
~$c_{k,3}(G_{56})-c_{k,3}(G_{512})=-6\binom{n-1}{2}+2\binom{n-4}{2}\leq-4\binom{n-1}{2}<0,$~
When \(k \ge 4\),
~$c_{k,3}(G_{56})-c_{k,3}(G_{512})=-6\binom{n-1}{k-1}-2[\binom{n-4}{k-4}-\binom{n-4}{k-1}]\leq-
    4\binom{n-1}{k-1}-2\binom{n-4}{k-4}<0,$~
Thus, \(G_{56}\) and \(G_{512}\) are determined by their~$k$-th hook immanantal polynomial.
Similarly,
~$c_{k,3}(G_{56})-c_{k,3}(G_{515})=-6\binom{n-1}{k-1}\neq0,$~
    ~$c_{k,3}(G_{56})-c_{k,3}(G_{525})=-4\binom{n-1}{k-1}\neq0,$~
so \(G_{56}\) is  determined from both \(G_{515}\) and \(G_{525}\) by the~$k$-th hook immanantal polynomial. For the previously missing pair \(G_{512},G_{515}\), the first differing coefficient is
	\[
	c_{k,3}(G_{512})-c_{k,3}(G_{515})
	=\begin{cases}
		-2\binom{n-4}{2}<0,& k=3,\\
		2\left[\binom{n-4}{k-4}-\binom{n-4}{k-1}\right]\neq0,& k\geq4,
	\end{cases}
	\]
	where the second line is nonzero because \(n\neq2k-1\). For the pair
	\(G_{512},G_{525}\), the difference is
	\[
	c_{k,3}(G_{512})-c_{k,3}(G_{525})
	=\begin{cases}
		2\binom{n-1}{2}-2\binom{n-4}{2}\neq0,& k=3,\\
		2\binom{n-1}{k-1}
		+2\left[\binom{n-4}{k-4}-\binom{n-4}{k-1}\right]>0,& k\geq4,
	\end{cases}
	\]
	because \(\binom{n-1}{k-1}\geq\binom{n-4}{k-1}\). Thus all six unordered
	pairs in this four-graph group have a nonzero coefficient certificate.
For graphs \(G_{515}\) and \(G_{525}\),
  $c_{k,3}(G_{515})-c_{k,3}(G_{525})=2\binom{n-1}{k-1}\neq0,$~
so \(G_{515}\) and \(G_{525}\) are determined by their $k$-th hook immanantal polynomial.
\item[(ii)]
From Tables 4, 5, Lemma 2.1, Corollary 2.1 and Lemma 2.10:
For graphs \(G_{58}\) and \(G_{511}\),
~$c_{k,3}(G_{58})-c_{k,3}(G_{511})=-2\binom{n-1}{k-1}\neq0.$~
For graphs \(G_{58}\) and \(G_{514}\):
When \(k=3\),
 $c_{k,3}(G_{58})-c_{k,3}(G_{514})=2\binom{n-1}{2}-2\binom{n-4}{2}\neq0,$~
 When \(k \ge 4\),
   ~$c_{k,3}(G_{58})-c_{k,3}(G_{514})=-2\binom{n-1}{k-1}-2[\binom{n-4}{k-4}-\binom{n-4}{k-1}]\leq-2\binom{n-4}{k-4}<0.$~
 For graphs \(G_{58}\) and \(G_{517}\),
  ~$c_{k,3}(G_{58})-c_{k,3}(G_{517})=-2\binom{n-1}{k-1}\neq0.$~
  For graphs \(G_{511}\) and \(G_{514}\):
  When \(k=3\),
   $c_{k,3}(G_{511})-c_{k,3}(G_{514})=2\binom{n-4}{2}>0,$
   When \(k \ge 4\),
   ~$c_{k,3}(G_{511})-c_{k,3}(G_{514})=-2[\binom{n-4}{k-4}-\binom{n-4}{k-1}],$~
   Since \(n \neq 2k-1\), we have
     ~$-2[\binom{n-4}{k-4}-\binom{n-4}{k-1}]\neq0.$~
   For graphs \(G_{511}\) and \(G_{517}\) with \(n>7\):
   When \(k=3\),
   ~$c_{k,4}(G_{511})-c_{k,4}(G_{517})=\frac{(5-n)}{2}\binom{n-2}{1}-4 \binom{n-4}{2}-2\binom{n-5}{2}<0,$~
   When \(k=4\),
    ~$c_{k,4}(G_{511})-c_{k,4}(G_{517})=\frac{(7-n)}{3}\binom{n-2}{2}+4[1-\binom{n-4}{3}]-2\binom{n-5}{k-1}=\frac{-(n-7)(7n^2-59n+138)}{6},$~ which has no positive integer solutions for \(n>7\).
When \(k \ge 5\),
 ~$c_{k,4}(G_{511})-c_{k,4}(G_{517})=\frac{(2k-n-1)}{k-1}\binom{n-2}{k-2}+4[\binom{n-4}{k-4}- \binom{n-4}{k-1}]+2[\binom{n-5}{k-5}-\binom{n-5}{k-1}]\neq0.$~
 For graphs \(G_{514}\) and \(G_{517}\):
 When \(k=3\),
  ~$c_{k,3}(G_{514})-c_{k,3}(G_{517})=-2\binom{n-4}{2}<0,$~
  When \(k \ge 4\),
   ~$c_{k,3}(G_{514})-c_{k,3}(G_{517})=2[\binom{n-4}{k-4}-\binom{n-4}{k-1}],$~
   Since \(n \neq 2k-1\), we have  ~$2[\binom{n-4}{k-4}-\binom{n-4}{k-1}]\neq0.$~
   Thus every graph in the set \(\{G_{58}, G_{511}, G_{514}, G_{517}\}\) is determined by its $k$-th hook immanantal polynomial.
\item[(iii)]
From Table 4, Lemma 2.1 and Lemma 2.10, for graphs \(G_{510}\) and \(G_{524}\):
When \(k=3\),
 ~$c_{k,3}(G_{510})-c_{k,3}(G_{524})=4\binom{n-1}{2}-2\binom{n-4}{2}\geq2\binom{n-1}{2}>0,$~
 When \(k \ge 4\),
 ~$c_{k,3}(G_{510})-c_{k,3}(G_{524})=4\binom{n-1}{k-1}+2[\binom{n-4}{k-4}-\binom{n-4}{k-1}]\geq2\binom{n-1}{k-1}+2\binom{n-4}{k-4}>0.$~
 Therefore, the graphs \(G_{510}\) and \(G_{524}\) are determined by their $k$-th hook immanantal polynomial.
\item[(iv)]
  From Tables 4, 5, Lemma 2.1, Corollary 2.1 and Lemma 2.10:
For graphs \(G_{53}\) and \(G_{57}\) with \(n>7\):
When \(k=3\),
~$c_{k,4}(G_{53})-c_{k,4}(G_{57})=-(5-n)\binom{n-2}{1}+8\binom{n-4}{2}+4\binom{n-5}{2}>0,$~
When \(k=4\),
 ~$c_{k,4}(G_{53})-c_{k,4}(G_{57})=-\frac{2(7-n)}{3}\binom{n-2}{2}-8[1-\binom{n-4}{3}]+4\binom{n-5}{3}=\frac{-(n-7)(7n^2-59n+138)}{3},$~
which has no integer solutions making the difference zero for all \(n>7\).
When \(k \ge 5\),
 ~$c_{k,4}(G_{53})-c_{k,4}(G_{57})=-2\frac{(2k-n-1)}{k-1}\binom{n-2}{k-2}-8[\binom{n-4}{k-4}- \binom{n-4}{k-1}]-4[\binom{n-5}{k-5}-\binom{n-5}{k-1}]\neq0,$~
Thus \(G_{53}\) and \(G_{57}\) are determined by their $k$-th hook immanantal polynomial.
For graphs \(G_{53}\) and \(G_{513}\):
When \(k=3\),
 ~$c_{k,3}(G_{53})-c_{k,3}(G_{513})=-2\binom{n-1}{2}+2\binom{n-4}{2}\neq0,$~
When \(k \ge 4\),
 ~$c_{k,3}(G_{53})-c_{k,3}(G_{513})=-2\binom{n-1}{k-1}-2[\binom{n-4}{k-4}-\binom{n-4}{k-1}]\leq-2\binom{n-4}{k-4}<0,$~
Hence \(G_{53}\) and \(G_{513}\) are determined by their $k$-th hook immanantal polynomial.
For graphs \(G_{53}\) and \(G_{519}\),
 ~$c_{k,3}(G_{53})-c_{k,3}(G_{519})=-2\binom{n-1}{k-1}\neq0,$~
so \(G_{53}\) and \(G_{519}\) are determined by their \(k\)-th hook immanantal polynomial.
For graphs \(G_{53}\) and \(G_{522}\),
 ~$c_{k,3}(G_{53})-c_{k,3}(G_{522})=-2\binom{n-1}{k-1}\neq0,$~
so \(G_{53}\) and \(G_{522}\) are determined by their \(k\)-th hook immanantal polynomial.
For graphs \(G_{57}\) and \(G_{513}\):
When \(k=3\),
  $c_{k,3}(G_{57})-c_{k,3}(G_{513})=-2\binom{n-1}{2}+2\binom{n-4}{2}\neq0,$~
When \(k \ge 4\),
~$c_{k,3}(G_{57})-c_{k,3}(G_{513})=-2\binom{n-1}{k-1}-2[\binom{n-4}{k-4}-\binom{n-4}{k-1}]\leq-2\binom{n-4}{k-4}<0,$~
Thus \(G_{57}\) and \(G_{513}\) are determined by their \(k\)-th hook immanantal polynomial.
For graphs \(G_{57}\) and \(G_{519}\),
$c_{k,3}(G_{57})-c_{k,3}(G_{519})=-2\binom{n-1}{k-1}<0,$~
so \(G_{57}\) and \(G_{519}\) are determined by their \(k\)-th hook immanantal polynomial.
For graphs \(G_{57}\) and \(G_{522}\),
 ~$c_{k,3}(G_{57})-c_{k,3}(G_{522})=-2\binom{n-1}{k-1}\neq0,$~
so \(G_{57}\) and \(G_{522}\) are determined by their \(k\)-th hook immanantal polynomial.
 For graphs \(G_{513}\) and \(G_{519}\):
When \(k=3\),
$c_{k,3}(G_{513})-c_{k,3}(G_{519})=-2\binom{n-4}{2}<0,$~
When \(k \ge 4\),
 $c_{k,3}(G_{513})-c_{k,3}(G_{519})=2[\binom{n-4}{k-4}-\binom{n-4}{k-1}],$~
Since \(n \neq 2k-1\), we obtain
~$2[\binom{n-4}{k-4}-\binom{n-4}{k-1}]\neq0,$~
so the graphs \(G_{513}\) and \(G_{519}\) are determined by their \(k\)-th hook immanantal polynomial.
For graphs \(G_{513}\) and \(G_{522}\):
When \(k=3\),
 ~$c_{k,3}(G_{513})-c_{k,3}(G_{522})=-2\binom{n-4}{2}<0,$~
When \(k \ge 4\),
 ~$c_{k,3}(G_{513})-c_{k,3}(G_{522})=2[\binom{n-4}{k-4}-\binom{n-4}{k-1}],$~
As \(n \neq 2k-1\), we have
~$2[\binom{n-4}{k-4}-\binom{n-4}{k-1}]\neq0,$~
thus the graphs \(G_{513}\) and \(G_{522}\) are determined by their \(k\)-th hook immanantal polynomial.
For graphs \(G_{519}\) and \(G_{522}\) with \(n>7\):
When \(k=3\),
 ~$c_{k,4}(G_{519})-c_{k,4}(G_{522})=-\frac{(5-n)}{2}\binom{n-2}{1}+4\binom{n-5}{2}>0,$~
When \(k=4\),
 ~$c_{k,4}(G_{519})-c_{k,4}(G_{522})=-\frac{(7-n)}{3}\binom{n-2}{2}+4\binom{n-5}{3}=\frac{(n-7)(5n^2-49n+126)}{6},$~
If this difference equals zero, the above equation admits no positive integer solutions satisfying \(n>7\).
When \(k \ge 5\),
  ~$c_{k,4}(G_{519})-c_{k,4}(G_{522})=-\frac{(2k-n-1)}{k-1}\binom{n-2}{k-2}-4[\binom{n-5}{k-5}-\binom{n-5}{k-1}]\neq0,$~
Therefore, the graphs \(G_{519}\) and \(G_{522}\) are determined by their \(k\)-th hook immanantal polynomial.
 \item[(v)]
 From Table 4, Lemma 2.1, Corollary 2.1, Lemma 2.19 and Lemma 2.10:For graphs \(G_{55}\) and \(G_{518}\):
When \(k=3\),
 ~$c_{k,3}(G_{55})-c_{k,3}(G_{518})=2\binom{n-4}{2}>0,$~
 When \(k \ge 4\),
  ~$c_{k,3}(G_{55})-c_{k,3}(G_{518})=-2[\binom{n-4}{k-4}-\binom{n-4}{k-1}],$~
  Since \(n \neq 2k-1\), we have~$-2[\binom{n-4}{k-4}-\binom{n-4}{k-1}]\neq0,$~
  Thus \(G_{55}\) and \(G_{518}\) are determined by their $k$-th hook immanantal polynomial.
  For graphs \(G_{55}\) and \(G_{520}\),
   ~$c_{k,3}(G_{55})-c_{k,3}(G_{520})=-2\binom{n-1}{k-1}\neq0,$~
   so \(G_{55}\) and \(G_{520}\) are determined by their $k$-th hook immanantal polynomial.
   For graphs \(G_{55}\) and \(G_{523}\) with \(n>7\):
   When \(k=3\),
   $c_{k,4}(G_{55})-c_{k,4}(G_{523})=\frac{(5-n)}{2}\binom{n-2}{1}-4\binom{n-4}{2}-2\binom{n-5}{2}<0,$~
   When \(k=4\),
    ~$c_{k,4}(G_{55})-c_{k,4}(G_{523})=\frac{(7-n)}{3}\binom{n-2}{2}+4[1-\binom{n-4}{3}]-2\binom{n-5}{3}=\frac{-(n-7)(7n^2-59n+138)}{6},$~
    which has no positive integer solutions for \(n>7\).
    When \(k \ge 5\),
     ~$c_{k,4}(G_{55})-c_{k,4}(G_{523})=\frac{(2k-n-1)}{k-1}\binom{n-2}{k-2}+4[\binom{n-4}{k-4} - \binom{n-4}{k-1}]+2[\binom{n-5}{k-5}-\binom{n-5}{k-1}]\neq0,$~
     Hence \(G_{55}\) and \(G_{523}\) are determined by their $k$-th hook immanantal polynomial.
     For graphs \(G_{518}\) and \(G_{520}\):
     When \(k=3\),
       ~$c_{k,3}(G_{518})-c_{k,3}(G_{520})=-2\binom{n-1}{2}-2\binom{n-4}{2}<0,$~
     When \(4 \le k \le n-3\),
       ~$c_{k,3}(G_{518})-c_{k,3}(G_{520})=-2\binom{n-1}{k-1}+2[\binom{n-4}{k-4}-\binom{n-4}{k-1}]\leq-2\binom{n-4}{k-1}<0,$~
       When \(k = n-2\),
       $c_{k,3}(G_{518})-c_{k,3}(G_{520})=-2\binom{n-1}{n-3}+2\binom{n-4}{n-6}\neq0,$
       When \(k = n-1\),
       $c_{k,3}(G_{518})-c_{k,3}(G_{520})=-2\binom{n-1}{n-2}+2\binom{n-4}{n-5}\neq0,$
       When \(k = n\),
       ~$c_{k,3}(G_{518})-c_{k,3}(G_{520})=0,~c_{k,4}(G_{518})-c_{k,4}(G_{520})=-1,$~
       Therefore \(G_{518}\) and \(G_{520}\) are determined by their $k$-th hook immanantal polynomial.
     For graphs \(G_{518}\) and \(G_{523}\):
     When \(k=3\),
      ~$c_{k,3}(G_{518})-c_{k,3}(G_{523})=-2\binom{n-4}{2}<0,$~
      When \(k \ge 4\),
      ~$c_{k,3}(G_{518})-c_{k,3}(G_{523})=2[\binom{n-4}{k-4}-\binom{n-4}{k-1}],$~
      Since \(n \neq 2k-1\), we have \(2[\binom{n-4}{k-4}-\binom{n-4}{k-1}] \neq 0\).
      Thus \(G_{518}\) and \(G_{523}\) are determined by their $k$-th hook immanantal polynomial.
      For graphs \(G_{520}\) and \(G_{523}\),
       ~$c_{k,3}(G_{520})-c_{k,3}(G_{523})=2\binom{n-1}{k-1}\neq0,$~
       so \(G_{520}\) and \(G_{523}\) are determined by their $k$-th hook immanantal polynomial.
   \item[(vi)]
   From Table 4, Lemma 2.1 and Lemma 2.10, for graphs \(G_{59}\) and \(G_{516}\):
   When \(k=3\),
    ~$c_{k,3}(G_{59})-c_{k,3}(G_{516})=-4\binom{n-1}{2}-2\binom{n-4}{2}<0,$~
    When \(k \ge 4\),
    ~$c_{k,3}(G_{59})-c_{k,3}(G_{516})=-4\binom{n-1}{k-1}-2[\binom{n-4}{k-4}-\binom{n-4}{k-1}]\leq-2\binom{n-1}{k-1}-2\binom{n-4}{k-4}<0,$~
   Thus, the graphs \(G_{59}\) and \(G_{516}\) are determined by their $k$-th hook immanantal polynomial.
   \item[(vii)]
 From Table 4, Lemma 2.1 and Corollary 2.1, for graphs \(G_{50}\) and \(G_{51}\) with \(n>7\):
 When \(k=3\),
~$c_{k,4}(G_{50})-c_{k,4}(G_{51})=-\frac{(5-n)}{2}\binom{n-2}{1}+4 \binom{n-4}{2}]+2\binom{n-5}{2}>0,$~
When \(k=4\),
 ~$c_{k,4}(G_{50})-c_{k,4}(G_{51})=-\frac{(7-n)}{3}\binom{n-2}{2}-4[1-\binom{n-4}{3}]+2\binom{n-5}{3}=\frac{(n-7)(7n^2-59n+138)}{6},$~
 which has no positive integer solutions for \(n>7\).
 When \(k \ge 5\),
 ~$c_{k,4}(G_{50})-c_{k,4}(G_{51})=-1\frac{(2k-n-1)}{k-1}\binom{n-2}{k-2}-4[\binom{n-4}{k-4} - \binom{n-4}{k-1}]-2[\binom{n-5}{k-5}-\binom{n-5}{k-1}]\neq0,$~
 Therefore, the graphs \(G_{50}\) and \(G_{51}\) are determined by their $k$-th hook immanantal polynomial.
\end{enumerate}
\end{proof}
\begin{proof}[Proof of Theorem~\ref{thm:main}]
Theorem ~\ref{thm:main} can be established directly from Lemmas 2.3-2.6 and Lemmas 3.1--3.4.
\end{proof}

\section{ Conclusion}
In this paper, we mainly prove that for any fixed partition \((k,1^{n-k})\) with \(n \neq 2k-1\), every subgraph obtained by deleting at most five edges from the complete graph is determined by its $k$-th hook immanantal polynomial of the Laplacian matrix.In fact, from Lemma 2.1 we know that when \(n=2k-1\), the binomial differences
\(\binom{n-4}{k-4}-\binom{n-4}{k-1},\quad \binom{n-5}{k-5}-\binom{n-5}{k-1}\)
equal zero. Hence the method used to prove Theorem 1.1 fails in this case, and new techniques need to be developed. Furthermore, many interesting problems remain for further investigation:
\begin{enumerate}
	
	\item[(i)] Are the graphs in \(\mathscr{G}_n\) determined by the \(k\)-th hook immanantal polynomial when \(n=2k-1\)?
	
\item[(ii)] What families of graphs are determined by the $k$-th hook immanantal polynomial of their graph matrix?
\end{enumerate}

\end{document}